\documentclass[12pt]{amsart}
\usepackage{verbatim}
\usepackage[draft]{todonotes}

\usepackage{amsthm}
\usetikzlibrary{arrows}
\usepackage{blindtext}

\usepackage{amsmath}
\usepackage{stmaryrd}
\usepackage{quiver}
\usepackage{amssymb}
\usepackage{mathrsfs}
\usepackage[all]{xy}
\usepackage{tikz}
\usepackage[alphabetic,nobysame,abbrev]{amsrefs}
\usepackage{enumerate}
\RequirePackage{fix-cm}
\usepackage{bm}
\usepackage{mdframed}
\usepackage{cancel}
\usepackage{makecell}
\usepackage{diagbox}

\usepackage{amscd}
\usepackage{amsmath}
\usepackage{bm}
\usepackage{newtxtext} 
\numberwithin{equation}{section}
\usepackage{romanbar}
\usepackage[toc,page]{appendix}
\usepackage{hyperref}
\input xy
\xyoption{all}
\usepackage{color}
\RequirePackage{fix-cm}
\usepackage{bm}
\usepackage{mdframed}
\usepackage{cancel}
\usepackage{makecell}
\usepackage{diagbox}
\usepackage{pgf,tikz}
\usepackage{mathrsfs}
\usetikzlibrary{arrows}
\usetikzlibrary{arrows,decorations.markings}
\usetikzlibrary{matrix}
\usetikzlibrary{graphs}
\usetikzlibrary{backgrounds}

\definecolor{qqqqff}{rgb}{0.,0.,1.}
\definecolor{xdxdff}{rgb}{0.49019607843137253,0.49019607843137253,1.}

\definecolor{qqqqff}{rgb}{0.,0.,1.}

\usepackage{tikz}
\usepackage{graphicx}
\usepackage{hyperref}
\usepackage[utf8]{inputenc}
\usepackage{amsmath,amsfonts,amssymb,euscript,graphicx,color,tikz}

\newtheorem{lemma}{Lemma}[section]
\newtheorem{corollary}[lemma]{Corollary}
\newtheorem{theorem}[lemma]{Theorem}
\newtheorem{proposition}[lemma]{Proposition}

\theoremstyle{definition}
\newtheorem{remark}[lemma]{Remark}

\newtheorem{definition}[lemma]{Definition}

\DeclareMathAlphabet{\mathpzc}{OT1}{pzc}{m}{it} 

\DeclareMathOperator{\modd}{mod}

\DeclareMathOperator{\add}{add}
\DeclareMathOperator{\Hom}{Hom}

\DeclareMathOperator{\id}{id}
\DeclareMathOperator{\Mor}{Mor}

\DeclareMathOperator{\Ext}{Ext}

\DeclareMathOperator{\Ker}{Ker}
\DeclareMathOperator{\Cok}{Cok}

\DeclareMathOperator{\Fac}{Fac}

\newtheorem{question}[lemma]{Question}
\newtheorem*{theorem 0*}{Theorem}
\newtheorem*{theorem a*}{Theorem A}
\newtheorem*{theorem b*}{Theorem B}

\newcounter{diagram}
\numberwithin{diagram}{section}

\begin{document}
	
	\title{Model structures arising from weak cotorsion pairs}
	
	\author{Ramin Ebrahimi}
	\address{School of Mathematical Sciences, Zhejiang Normal University, Jinhua 321004, China\\ and School of Mathematics, Institute for Research in Fundamental Sciences (IPM), P.O. Box: 19395-5746, Tehran, Iran}
	\email{rebrahimi@zjnu.edu.cn / ramin.ebrahimi1369@gmail.com}
	
	\author{Rasool Hafezi}
	\address{School of Mathematics and Statistics, Nanjing University of Information Science and Technology, Nanjing, Jiangsu 210044, P.R. China}
	\email{hafezi@nuist.edu.cn}

\author{Jiaqun Wei}
	\address{School of Mathematical Science, Zhejiang Normal University, Jinhua 321004, China}
	\email{weijiaqun5479@zjnu.edu.cn}

	\subjclass[2020]{{18N40}, {16D90}, {16E30}}
	
	\keywords{Model structures, Cotorsion pairs, Left weak cotorsion pairs, $\tau$-tilting modules}

	\begin{abstract}
		Let $\mathcal{A}$ be an abelian category.
		Beligiannis and Reiten proved that there is a bijective correspondence between so-called projective model structures on $\mathcal{A}$ and hereditary cotorsion pairs in $\mathcal{A}$ with a contravariantly finite core. 
		
		It is well-known that, tilting modules induce cotorsion pairs, so we may have a homotopicl interpretation of tilting modules. But a recent generalization of tilting modules, support $\tau$-tilting modules, induce weak cotorsion pairs.
		
		In this paper, we define weak projective model structures and prove that there is a bijective correspondence between weak projective model structures and left weak cotorsion pairs satisfying some mild conditions. This is a generalization of Beligiannis-Reiten correspondence from the perspective and philosophy of $\tau$-tilting theory. In particular, we prove that any support $\tau$-tilting module induce a model structure, and there is bijective correspondence between support $\tau$-tilting modules and a certain class of model structures.
	\end{abstract}
	
	\maketitle


	\section{Introduction}
It is a common situation in mathematics that, in a given category under study, we have a class of morphism (weak equivalences), which although they are not isomorphism, we would like to consider them as isomorphism. Qusi-isomorphisms in homological algebra and weak equivalences in homotopy theory are important instances.
    
There is a canonical way to do this. Indeed, given any class of morphisms $W$ in a category $\mathcal{A}$, using Gabriel-Zisman localization \cite{GZ}, one can obtain a new category $\mathcal{A}[W^{-1}]$, called the localization of $\mathcal{A}$ at $W$, which have the same object as $\mathcal{A}$, but morhisms in $W$ have become isomorphisms. Roughly speaking, we add formal inverses to all morphisms in $W$, and a morphism in $\mathcal{A}[W^{-1}]$ is a finite sequence of morphisms in $\mathcal{A}$ and the formal inverses of elements of $W$. Also, there is a canonical functor $\mathcal{A}\rightarrow\mathcal{A}[W^{-1}]$, which is universal among all functors from $\mathcal{A}$ to another category that send morphisms in $W$ to isomorphisms. 
    
Thus, $\Hom$-sets in $\mathcal{A}[W^{-1}]$ are too complicated.
Indeed, we don't even know for two objects $A,B\in\mathcal{A}$, if $\Hom_{\mathcal{A}[W^{-1}]}(A,B)$ is a set or a proper class.

Model structures were introduced by Quillen with the aim to do homotopy theory in a general category \cite{Q1,Q2,Q3}.
If the class $W$ of morphism that we would like to consider them as isomorphisms fits in a Quillen model structure $(Cof,We=W,Fib)$, then the localized category $\mathcal{A}[W^{-1}]$ is equivalent to a certain subfactor of $\mathcal{A}$ and it is much easier to be understood. 

A model structure on a category $\mathcal{A}$ is a triple $\mathcal{M}=(Cof,We,Fib)$ of classes of morphisms in $\mathcal{A}$, respectively called cofibrations, weak equivalences and fibrations satisfying some axioms, see Definition \ref{2.4}.
If our category $\mathcal{A}$ is abelian (or exact or triangulated or even extriangulated), model structures are closely related to seemingly less complicated notion of cotorsion pair.

A model structure $\mathcal{M}=(Cof,We,Fib)$ on $\mathcal{A}$ is called addmisssible if cofibrations are exactly monics with cofibrant cokernel, and fibration are exactly epics with fibrant kernel. Then, Hovey's correspondence says that there is a bijective correspondence between admissible model structures and pairs of cotorsion pairs satisfying some compatiblity conditions (Hovey twin cotorsion pairs). For more detail the reader is referred to the original work of Hovey \cite{H2} or the recent book of Gillespie \cite{G}.

On the other hand, Beligiannis and Reiten proved that some model structures can be
parametrized by just one cotorsion pair. Indeed, they defined projective model structures and proved that there is a bijective correspondence between projective model structures and hereditary cotorsion pairs with contravariantly finite core. Note that the heriditary condition is missed in \cite{BR}, and as it was proved in \cite{CLZ}, this condition is necessary. Recently, Beligiannis-Reiten correspondence has been generalized to weakly idempotent complete exact and extriangulated categories \cite{CLZ,HZZZ}.

Let $\Lambda$ be an Artin algebra. By Auslander-Reiten correspondence, any finitely generated (co)tilting $\Lambda$-module induce a cotorsion pair in $\modd\text{-}\Lambda$ \cite{AR}. So, we are able to give a homotopical interpretation of tilting modules. This was done by Beligiannis and Reiten \cite{BR}.

The following question was our motivation for the project.

\begin{question}\label{1.1}
Let $\Lambda$ be an Artin algebra. Any support $\tau$-tilting module induce a left weak cotorsion pair ($lw$-cotorsion pair for short)\cite{BZ}, see Definition \ref{3.1}.
Is there a connection between $lw$-cotorsion pairs and model structures?
If so, can we give a homotopical interpretation of support $\tau$-tilting modules?
\end{question}
We give an affirmant answer to these questions.
First, by weakening the notion of projective model structures, we define weak projective model structures, see Definition \ref{3.2}, and we prove that (see Theorem \ref{3.20}):

\begin{theorem}\label{1.2}
let $\mathcal{A}$ be an abelian category.
There is a bijective correspondence between weak projective model structures on $\mathcal{A}$, $lw$-cotorsion pairs in $\mathcal{A}$ with contravariantly finite core which satisfies some mild conditions.
\end{theorem}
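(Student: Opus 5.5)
The plan is to follow the Beligiannis--Reiten/Cui--Lu--Zhang strategy, with the orthogonality $\Ext^1(\mathcal{X},\mathcal{Y})=0$ replaced throughout by the weaker $lw$-cotorsion axioms of Definition \ref{3.1}. I write an $lw$-cotorsion pair as $(\mathcal{X},\mathcal{Y})$ and its core as $\omega=\mathcal{X}\cap\mathcal{Y}$. The first step constructs the model structure from the pair. Following the projective case, I would set
\begin{itemize}
\item cofibrations: monomorphisms $f$ with $\Cok f\in\mathcal{X}$ (possibly only up to a split summand from $\omega$, as the weak setting may force);
\item fibrations: morphisms $g$ with $\Ker g\in\mathcal{Y}$ that are $\omega$-epic, i.e.\ $\Hom(W,g)$ is surjective for all $W\in\omega$;
\item weak equivalences: composites of a trivial cofibration followed by a trivial fibration.
\end{itemize}
Trivial cofibrations should be the split monomorphisms with cokernel in $\omega$, and trivial fibrations the epimorphisms with kernel in $\mathcal{Y}$ that are $\mathcal{X}$-epic.

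Next I verify the axioms of Definition \ref{2.4}. Closure under retracts and compositions is routine. For the factorizations, I take an $\omega$-approximation $W\to B$ (contravariant finiteness of the core) and factor $f\colon A\to B$ as $A\to A\oplus W\to B$. For the other factorization I would use the approximation sequences $0\to Y\to X\to M\to 0$ that an $lw$-cotorsion pair provides, together with a pullback. The lifting axioms are where weakness bites. Lifting a cofibration against a trivial fibration needs $\Ext^1(\Cok f,\Ker g)=0$, or at least that the relevant extension class is killed. I would try to get this from the $\mathcal{X}$-epic requirement on trivial fibrations: lift the map into the target along $g$ using $\Hom(X,g)$ surjective, then correct by the homotopy.

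The third step is 2-out-of-3 for weak equivalences. As in \cite{CLZ}, I would characterize weak equivalences intrinsically: $f$ is a weak equivalence iff it becomes an isomorphism in the stable category $\mathcal{A}/\omega$ restricted to $\mathcal{X}$ after cofibrant replacement. Equivalently, $\Hom(X,-)$ applied to a cofibrant-replacement square induces isomorphisms modulo $\omega$. 2-out-of-3 then follows from functoriality. This is where the ``mild conditions'' enter: I expect to need $\mathcal{X}$ closed under kernels of epimorphisms (a hereditary-type condition) and $\omega$ being $\Ext$-projective in $\mathcal{X}$.

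The fourth step is the converse. Given a weak projective model structure (Definition \ref{3.2}), take $\mathcal{X}$ to be the cofibrant objects and $\mathcal{Y}$ the trivially fibrant objects, and check that they form an $lw$-cotorsion pair. The approximation sequences come from factoring $0\to M$ and $M\to 0$, and the core is the class of trivially cofibrant (projective-like) objects. Its contravariant finiteness follows from factoring $0\to M$ as a trivial cofibration followed by a fibration. The two constructions are mutually inverse because a model structure is determined by its cofibrations and trivial fibrations, which are recovered from $(\mathcal{X},\mathcal{Y})$ by the lifting characterization.

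I expect 2-out-of-3 to be the main obstacle, since without genuine orthogonality the homotopy relation is harder to control. My first attempt would be to show that right homotopy is given by factoring through $\omega$-approximations, and to reduce everything to the stable category $\mathcal{X}/\omega$.
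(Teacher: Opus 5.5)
\textbf{There is a genuine gap.} It starts with a misreading of Definition \ref{3.1}. An $lw$-cotorsion pair \emph{keeps} $\Ext^1(\mathcal{X},\mathcal{Y})=0$; what is weakened is the left approximation sequence \eqref{eq3.1}, where $g^A\colon A\to Y^A$ need not be monic.

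Consequences for your plan:
\begin{itemize}
\item Lifting monics with cokernel in $\mathcal{X}$ against epics with kernel in $\mathcal{Y}$ is automatic from the Lifting--Extension Lemma. Your $\mathcal{X}$-epic condition on trivial fibrations is redundant. The real obstruction is the factorization axiom.
\item Your classes fail already for the simplest example. On a nonzero abelian category take $(\mathcal{X},\mathcal{Y})=(\mathcal{A},0)$, which is $({}^{\perp_1}\Fac T,\Fac T)$ for $T=0$, with $\omega=0$. Your trivial fibrations are the isomorphisms and your cofibrations are monic. So $A\to 0$ cannot be factored as a cofibration followed by a trivial fibration unless $A=0$; allowing $\omega$-summands does not help since $\omega=0$.
\item In the same example your fibrations (kernel in $\mathcal{Y}$) are exactly the monics. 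So not every object is fibrant, contradicting Definition \ref{3.2}(2).
\item Conceptually, by Theorem \ref{2.13}(1), a model structure with monic cofibrations and $TFib=\{\text{epics with kernel in }\mathpzc{TF}\}$ has $(\mathpzc{C},\mathpzc{TF})$ a genuine cotorsion pair. Monic cofibrations can therefore only recover the Beligiannis--Reiten case.
\end{itemize}
The paper instead takes:
\begin{itemize}
\item $TFib_\omega$ = epics with kernel in $\mathcal{Y}$;
\item $Cof_\omega={}^{\boxslash}TFib_\omega$, which need not be monic;
\item $TCof_\omega$ = split monics with cokernel in $\omega$;
\item $Fib_\omega$ = all $\omega$-epics, with no kernel condition;
\item $We_\omega=TFib_\omega\circ TCof_\omega$.
\end{itemize}
For $(\mathcal{A},0)$ this gives the trivial model structure in which every map is a cofibration.

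\textbf{The second missing ingredient is the correct set of ``mild conditions''.} To get a bijection you must pin these down and prove them necessary. Proposition \ref{3.5} shows a weak projective model structure forces two things. First, $\mathcal{Y}$ is closed under extensions and cokernels of monics, which comes from 2-out-of-3. Second, the homotopical condition: $g^A$ in \eqref{eq3.1} can be chosen with the left lifting property against all epics with kernel in $\mathcal{Y}$.

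The homotopical condition is exactly what makes the second factorization work. Factor $f\colon A\to B$ through the epic $(f,g_B)\colon A\oplus X_B\to B$ with kernel $K$. Then push out along $g^K\colon K\to Y^K$, which is a cofibration by hypothesis, so its pushout is one too (Proposition \ref{3.12}).

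Your guessed conditions ($\mathcal{X}$ closed under kernels of epics, $\omega$ Ext-projective in $\mathcal{X}$) are shown neither necessary nor sufficient for this. Your step four only checks the $lw$-cotorsion axioms and finiteness of the core, so the class on the pair side is never identified.

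Finally, deducing 2-out-of-3 from an unconstructed cofibrant replacement into $\mathcal{X}/\omega$ is a sketch, not an argument. The paper uses two tools:
\begin{itemize}
\item the kernel criterion of Lemma \ref{3.13}: $f\in We_\omega$ iff $\Ker(f,t)\in\mathcal{Y}$ for a right $\omega$-approximation $t$;
\item Lemma \ref{3.17}: in any factorization $f=pi$ of a weak equivalence with $i\in Cof_\omega$ and $p\in TFib_\omega$, the map $i$ is already a split monic with cokernel in $\omega$.
\end{itemize}
Lemma \ref{3.17} is where closure of $\mathcal{Y}$ under extensions and cokernels of monics enters. It handles the case $g,gf\in We_\omega\Rightarrow f\in We_\omega$ despite cofibrations being non-monic.
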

Then, we use this result to give a homotopical interpretation of support $\tau$-tilting modules. Indeed, we prove that for Artin algebra $\Lambda$, there is a bijective correspondence between support $\tau$-tilting modules and model structures satisfying the following conditions (see Theorem \ref{4.5}):
   \begin{itemize}
\item[(i)]
All modules in $\modd\text{-}\Lambda$ are fibrant.
\item[(ii)]
Trivial fibrations coincides with epics with trivially fibrant kernel.
\item[(iii)]
The subcategory of trivially fibrant objects is closed under factor modules.
\end{itemize}
    
    \section*{Notations and Conventions} 
 Throughout the paper, $\mathcal{A}$ is mostly an abelian category, but in some places $\mathcal{A}$ may be an arbitrary category. All subcategories are full, so a subcategory is identified with its class of objects.

For a subcategory $\mathcal{S}$ of $\mathcal{A}$, the left and right $\Hom$-orthogonal and $\Ext^1$-orthogonal subcategories of $\mathcal{S}$ are defined as

\begin{align*}
{}^{\perp_1}\mathcal{S}:=\{X\in \mathcal{A}\mid\; \Ext^1_{\mathcal{A}}(X,S)=0,\; \forall S\in \mathcal{S}\},\\
\mathcal{S}^{\perp_1}:=\{X\in \mathcal{A}\mid\; \Ext^1_{\mathcal{A}}(S,X)=0,\; \forall S\in \mathcal{S}\}.
\end{align*}
 
Also, for two subcategories $\mathcal{X}$ and $\mathcal{Y}$ of $\mathcal{A}$ and a non-negative integer $i$, $\Ext_{\mathcal{A}}^i(\mathcal{X},\mathcal{Y})=0$ means that $\Ext_{\mathcal{A}}^i(X,Y)=0$ for all $X\in\mathcal{X}$ and all $Y\in\mathcal{Y}$.

\section{Preliminaries on model structures and cotorsion pairs}
In this section we will recall the definitions and basic properties of model structures and cotorsion pairs. Also we recall the correspondence of Beligiannis and Reiten.
For more details on general model structure the reader is refered to Quillen original work \cite{Q1} and also Hovey's book \cite{H1}. For model structures on abelian categories we recommend the recent book of Gillespie \cite{G}.

Let start from cotorsion pairs.
\begin{definition}\label{2.1}
Let $\mathcal{A}$ be an abelian category.
A pair $(\mathcal{X},\mathcal{Y})$ of subcategories of $\mathcal{A}$ is called a {\it (complete) cotorsion pair} if
    \begin{itemize}
    \item[(1)]
    $\mathcal{X}$ and $\mathcal{Y}$ are closed under direct summands.
    \item[(2)]
    $\Ext_{\mathcal{A}}^1(\mathcal{X},\mathcal{Y})=0$.
    \item[(3)]
    For each $A\in \mathcal{A}$, there exist short exact sequences
    \begin{align}
    0\rightarrow A\overset{g^A}{\longrightarrow} Y^A\longrightarrow X^A\rightarrow 0\label{eq2.1}\\
    0\rightarrow Y_A\longrightarrow X_A\overset{f_A}{\longrightarrow} A\rightarrow\label{eq2.2} 0
    \end{align}
    where $X_A,X^A\in \mathcal{X}$ and $Y_A,Y^A\in \mathcal{Y}$.
    \end{itemize}
    The intersection $\omega:=\mathcal{X}\cap\mathcal{Y}$ is called the {\it core} of the cotorsion pair.
    $(\mathcal{X},\mathcal{Y})$ is called a {\it hreditary} cotorsion pair if $\Ext_{\mathcal{A}}^i(\mathcal{X},\mathcal{Y})=0$ for every $i\geq 1$.
\end{definition}
The following lemma gives some necessary and sufficient conditions for a cotorsion pair to be hereditary.
We use this lemma as motivation to define hereditary left weak cotorsion pairs, see Definition \ref{3.6}.

\begin{lemma}$($\cite[6.17]{S}$)$\label{2.2}
The following are equivalent for a cotorsion pair $(\mathcal{X},\mathcal{Y})$.
\begin{itemize}
\item[(1)]
$(\mathcal{X},\mathcal{Y})$ is a hereditary cotorsion pair.
\item[(2)]
$\mathcal{X}$ is closed under kernel of epimorphisms.
\item[(3)]
$\mathcal{Y}$ is closed under cokernel of monomorphisms.
\end{itemize}
\end{lemma}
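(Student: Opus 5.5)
The plan is to prove the cycle $(1)\Rightarrow(2)\Rightarrow(1)$ and, dually, $(1)\Leftrightarrow(3)$. Two standard facts about a complete cotorsion pair will be used throughout: $\mathcal{X}={}^{\perp_1}\mathcal{Y}$ and $\mathcal{Y}=\mathcal{X}^{\perp_1}$. To see the first, let $A\in{}^{\perp_1}\mathcal{Y}$ and take the sequence \eqref{eq2.2}. It splits because $\Ext^1(A,Y_A)=0$, so $A$ is a summand of $X_A$ and lies in $\mathcal{X}$ by closure under summands. The second is dual, using \eqref{eq2.1}. We also need enough projectives, or at least Yoneda Ext with its long exact sequences. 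In a general abelian category, $\Ext^i$ means Yoneda Ext, which still has the long exact sequences in both variables, and I will use only those.

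For $(1)\Rightarrow(2)$, let $0\to K\to X\to X'\to 0$ be exact with $X,X'\in\mathcal{X}$, and let $Y\in\mathcal{Y}$. The long exact sequence in the first variable contains
\[
\Ext^1(X,Y)\to\Ext^1(K,Y)\to\Ext^2(X',Y).
\]
Both outer terms vanish by heredity, so $\Ext^1(K,Y)=0$ and $K\in{}^{\perp_1}\mathcal{Y}=\mathcal{X}$. The implication $(1)\Rightarrow(3)$ is the dual argument, using the second variable.

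For $(2)\Rightarrow(1)$, I will prove $\Ext^i(\mathcal{X},\mathcal{Y})=0$ by induction on $i$, with base case $i=1$ given. Fix $X\in\mathcal{X}$ and use completeness to get $0\to Y_X\to X_X\to X\to 0$ with $X_X\in\mathcal{X}$ and $Y_X\in\mathcal{Y}$. This is the step I expect to be the main obstacle: the argument needs the syzygy-like object to lie in $\mathcal{X}$, and $Y_X$ need not be in $\mathcal{X}$ for an arbitrary approximation. Condition (2) resolves this, since $Y_X$ is the kernel of an epimorphism between objects of $\mathcal{X}$ and hence $Y_X\in\mathcal{X}$.

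With that in hand, the long exact sequence for $Y\in\mathcal{Y}$ contains
\[
\Ext^{i}(Y_X,Y)\to\Ext^{i+1}(X,Y)\to\Ext^{i+1}(X_X,Y).
\]
The first term vanishes by the inductive hypothesis applied to $Y_X\in\mathcal{X}$. For the third term, the subtlety is that $\Ext^{i+1}(X_X,Y)$ is not yet known to vanish, so the induction must be organised differently. I will use dimension shifting in the second variable instead. Take $0\to Y\to Y^Y\to X^Y\to 0$ with $Y^Y\in\mathcal{Y}$ and $X^Y\in\mathcal{X}$. Since $\mathcal{Y}$ is closed under extensions (it equals $\mathcal{X}^{\perp_1}$), we get $X^Y\in\mathcal{Y}$, so $X^Y\in\omega$.

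I expect this to still be awkward, so I will fall back on the cleaner argument in the first variable with an induction on $i$ stated for all pairs simultaneously: $\Ext^i(X,Y)=0$ for all $X\in\mathcal{X}$ and $Y\in\mathcal{Y}$. To make the third term vanish, use the Yoneda description directly: every element of $\Ext^{i+1}(X,Y)$ is the Yoneda product of some element of $\Ext^1(X,M)$ with some element of $\Ext^i(M,Y)$, for some object $M$. Pull the class in $\Ext^1(X,M)$ back along the approximation $X_X\to X$; this pullback lies in $\Ext^1(X_X,M)$. To make it trivial I would instead write the class as the product of the element of $\Ext^1(X,Y_X)$ given by $0\to Y_X\to X_X\to X\to 0$ with an element of $\Ext^i(Y_X,Y)$. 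That element of $\Ext^i(Y_X,Y)$ is zero by induction, since $Y_X\in\mathcal{X}$. So the real task is the standard lemma that every Yoneda $(i+1)$-extension of $X$ factors through the chosen epimorphism $X_X\to X$ from an object of $\mathcal{X}$ with kernel in $\mathcal{X}$. I would prove that lemma by comparing with the first term of the extension via a pullback, or cite it from \cite{S}.

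$(3)\Rightarrow(1)$ is exactly dual, using $0\to Y\to Y^Y\to X^Y\to 0$. By (3), $X^Y$ is the cokernel of a monomorphism between objects of $\mathcal{Y}$, so $X^Y\in\mathcal{Y}$, and the induction runs in the second variable.
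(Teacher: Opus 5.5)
The paper does not prove this lemma; it quotes it from \cite[6.17]{S}, so your argument has to stand on its own. Your identifications $\mathcal{X}={}^{\perp_1}\mathcal{Y}$ and $\mathcal{Y}=\mathcal{X}^{\perp_1}$ are correct, and so are $(1)\Rightarrow(2)$ and $(1)\Rightarrow(3)$. The converse implications, however, have a genuine gap. They rest on a ``standard lemma'' that is not standard, and for a \emph{fixed} approximation it is equivalent to what you are trying to prove. Take the long exact sequence of $\epsilon=[0\to Y_X\to X_X\to X\to 0]$. An element $\eta\in\Ext^{i+1}(X,Y)$ has the form $\theta\cdot\epsilon$ with $\theta\in\Ext^{i}(Y_X,Y)$ if and only if its restriction to $\Ext^{i+1}(X_X,Y)$ vanishes. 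That is exactly the obstruction you flagged a paragraph earlier. The pullback comparison you suggest does not remove it. If $\eta$ ends with $0\to M\to E_0\to X\to 0$, a map $X_X\to E_0$ over $X$ exists only if the pulled-back class in $\Ext^1(X_X,M)$ is zero, and $M$ is arbitrary. Separately, in your abandoned middle attempt, closure of $\mathcal{Y}$ under extensions does not give $X^Y\in\mathcal{Y}$ from $0\to Y\to Y^Y\to X^Y\to 0$; that conclusion is condition (3) itself.

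The missing idea is to let the epimorphism from $\mathcal{X}$ depend on $\eta$; this is effaceability of Yoneda Ext. Induct on $i\geq 1$ with hypothesis $\Ext^i(\mathcal{X},\mathcal{Y})=0$. Write $\eta=\theta\cdot\epsilon_0$, where $\epsilon_0=[0\to M\to E_0\to X\to 0]$ and $\theta\in\Ext^i(M,Y)$. By completeness, choose an epimorphism $X'\to E_0$ with $X'\in\mathcal{X}$. The composite $X'\to X$ is an epimorphism between objects of $\mathcal{X}$, so its kernel $M'$ lies in $\mathcal{X}$ by (2). Let $\epsilon'=[0\to M'\to X'\to X\to 0]$. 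The induced morphism of short exact sequences, identity on $X$, gives $\psi\colon M'\to M$ with $\epsilon_0=\psi_*\epsilon'$. Hence $\eta=(\psi^*\theta)\cdot\epsilon'$ with $\psi^*\theta\in\Ext^i(M',Y)=0$, so $\eta=0$. For $(3)\Rightarrow(1)$, argue dually. Embed the middle term $E_i$ adjacent to $Y$ into some $Y'\in\mathcal{Y}$, rather than using a fixed $Y\to Y^Y$. The cokernel $N'$ of the composite monomorphism $Y\to Y'$ lies in $\mathcal{Y}$ by (3), and $\eta$ then factors through $\Ext^i(X,N')=0$.
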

The short exact sequences \eqref{eq2.1} and \eqref{eq2.2} are called {\it approximation sequences}, because of the following lemma. First let us recall the concept of approximation, which is fundamental both for model structures and cotorsion pairs. Let $\mathcal{B}$ be a subcategory of $\mathcal{A}$, and $A\in\mathcal{A}$. A {\it right $\mathcal{B}$-approximation} for $A$ is a morphism $f_A:B\rightarrow A$ for some object $B\in \mathcal{B}$ such that any other map $f':B'\rightarrow A$ with $B'\in \mathcal{B}$ factor through $f$. If any object of $\mathcal{A}$ have a right $\mathcal{B}$-approximation, we say that $\mathcal{B}$ is a {\it contravariantly finite} subcategory of $\mathcal{A}$.
The concepts of {\it left $\mathcal{B}$-approximation} and {\it covariantly finite} subcategory are defined dually.
    
\begin{lemma}\label{2.3}
Keeping the notations of Definition \ref{2.1} we have
    \begin{itemize}
    \item[(1)]
    $g^A$ is a left $\mathcal{Y}$-approximation for $A$.
    \item[(2)]
    $f_A$ is a right $\mathcal{X}$-approximation for $A$
    \item[(3)]
    $\mathcal{X}={}^{\perp_1}\mathcal{Y}$.
    \item[(4)]
    $\mathcal{Y}=\mathcal{X}^{\perp_1}$.
    \end{itemize}
     \begin{proof}
     For the reader's convenience, let sketch the proof of $(2)$. Given a morphism $t:X'\rightarrow A$ with $X'\in\mathcal{X}$, we can construct the following pull back diagram.
     \[\begin{tikzcd}
	0 & {Y_A} & E & {X'} & 0 \\
	0 & {Y_A} & {X_A} & A & 0
	\arrow[from=1-1, to=1-2]
	\arrow[from=1-2, to=1-3]
	\arrow["id"', from=1-2, to=2-2]
	\arrow[from=1-3, to=1-4]
	\arrow[from=1-3, to=2-3]
	\arrow[from=1-4, to=1-5]
	\arrow["{f'}", from=1-4, to=2-4]
	\arrow[from=2-1, to=2-2]
	\arrow[from=2-2, to=2-3]
	\arrow["{f_A}", from=2-3, to=2-4]
	\arrow[from=2-4, to=2-5]
\end{tikzcd}\]

   Then $\Ext_{\mathcal{A}}^1(\mathcal{X},\mathcal{Y})=0$ implies that the top row is an split short exact sequence. Thus $f'$ factors through $f_A$.
     \end{proof}
    \end{lemma}
    
In this paper we are only concern with complete cotorsion pairs, and we just call them cotorsion pairs.
	
Although we are interested in abelian categories, let start with the definition of model structure on a general category. Recall that, by a retract of a morphism $f$ we mean a morphism $g$ for which there is a commutative diagram 
	\[\begin{tikzcd}
	X & A & X \\
	Y & B & Y
	\arrow[from=1-1, to=1-2]
	\arrow["{id_X}"{description}, curve={height=-12pt}, from=1-1, to=1-3]
	\arrow["g", from=1-1, to=2-1]
	\arrow[from=1-2, to=1-3]
	\arrow["f", from=1-2, to=2-2]
	\arrow["g", from=1-3, to=2-3]
	\arrow[from=2-1, to=2-2]
	\arrow["{id_Y}"{description}, curve={height=12pt}, from=2-1, to=2-3]
	\arrow[from=2-2, to=2-3]
\end{tikzcd}\]

This means that in the morphism category of $\mathcal{A}$ (where the objects are morphisms of $\mathcal{A}$ and morphisms are commutative squares) $g$ is a retract of $f$ in the regular sense.

Let $i$ and $p$ be two morphisms in $\mathcal{A}$. We say that $i$ has {\it left lifting property} with respect to $p$, or equivalently $p$ has {\it right lifting property} with respect to $i$, if for every commutative square as follows the dashed morphism exists, making both triangles commutative.
\[\begin{tikzcd}
	A & C \\
	B & D
	\arrow[from=1-1, to=1-2]
	\arrow["i"', from=1-1, to=2-1]
	\arrow["p", from=1-2, to=2-2]
	\arrow[dashed, from=2-1, to=1-2]
	\arrow[from=2-1, to=2-2]
\end{tikzcd}\]
For a class $\mathcal{S}$ of morphisms we denote by ${}^{\boxslash}\mathcal{S}$
(resp. $\mathcal{S}{}^{\boxslash}$) the class of all morphisms that have left lifting property (resp. right lifting property) with respect to all morphisms in $\mathcal{S}$.

\begin{definition}$($\cite{Q1,H1,G}$)$\label{2.4}
A {\it model structure} on a category $\mathcal{A}$ is a triple $\mathcal{M}=(Cof,We,Fib)$ of classes of morphisms in $\mathcal{A}$, respectively called {\it cofibrations}, {\it weak equivalences}, and {\it fibrations}, satisfying the following axioms. By definition, we call a map a {\it trivial cofibration} if it is both a cofibration and a weak equivalence. Similarly a {\it trivial fibration} is both a fibration and a weak equivalence. Trivial cofibrations and trivial fibrations are denoted by $TCof$ and $TFib$ respectively. 
\begin{itemize}
\item[(M1)]
(Lifting Axiom) Morphisms in $TCof$ have left lifting property with respect to morphism in $Fib$, and morphisms in $Cof$ have left lifting property with respect to morphisms in $TFib$.
\item[(M2)]
(Factorization Axiom) Any morphism $f$ may be factored as $f = pi$ in two ways. First, where $p$ is a fibration and $i$ is a trivial cofibration. Second, where $p$ is a trivial fibration and $i$ is a cofibration. 
\item[(M3)]
(2 out of 3 Axiom) Let $gf$ be a composition of two morphisms in $\mathcal{A}$. If two out of three of the morphisms $f$, $g$, and $gf$ are a weak equivalence then so is the third. 
\item[(M4)]
(Retracts Axiom) The class of fibrations, cofibrations, and weak equivalences are each closed under retracts. 
\end{itemize}
\end{definition}
A {\it model category} is a category $\mathcal{A}$ endowed with a model structure $\mathcal{M}=(Cof,We,Fib)$ which satisfies the following axiom.
\begin{itemize}
\item[(M0)]
All finite limits and finite colimits exist in $\mathcal{A}$.
\end{itemize}
\begin{remark}\label{2.5}
Let $\mathcal{L}$ and $\mathcal{R}$ be two classes of morphisms closed under retracts in a general category $\mathcal{A}$. The pair $(\mathcal{L},\mathcal{R})$ is called a {\it weak factorization system} if morphisms in $\mathcal{L}$ have left lifting property with respect to morphisms in $\mathcal{R}$ and any morphism $f$ factors as $f=pi$, where $i\in \mathcal{L}$ and $p\in\mathcal{R}$. So, in any model structure, $(TCof,Fib)$ and $(Cof,TFib)$ are both weak factorization systems.
\end{remark}

The following two lemmas are some consequences of the axioms of model structures that we will use frequently.
	
\begin{lemma}$($\cite{H1}$)$\label{2.6}
Let $\mathcal{M}=(Cof,We,Fib)$ be a model structure on the category $\mathcal{A}$.
	\begin{itemize}
	\item[(1)]
	$Cof={}^{\boxslash}TFib$ and $TFib=Cof{}^{\boxslash}$.
	\item[(2)]
	$TCof={}^{\boxslash}Fib$ and $Fib=TCof{}^{\boxslash}$.
	\item[(3)]
	Weak equivalences are exactly the morphisms that can be factored as a trivial cofibration followed by a trivial fibration, i.e. $We=TFib \circ TCof$.
	\end{itemize}
\end{lemma}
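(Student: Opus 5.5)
Parts (1) and (2) both follow from the retract argument. Part (3) is a direct consequence of the factorization and two-out-of-three axioms.

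\emph{Part (1).} The inclusion $Cof\subseteq {}^{\boxslash}TFib$ is axiom (M1). For the reverse inclusion, let $f:A\to B$ have the left lifting property with respect to all trivial fibrations.
\begin{itemize}
\item[(a)] By (M2), factor $f=pi$ with $i:A\to C$ a cofibration and $p:C\to B$ a trivial fibration.
\item[(b)] Consider the commutative square with top edge $i:A\to C$, left edge $f:A\to B$, right edge $p:C\to B$, and bottom edge $\id_B$. It commutes because $pi=f$.
\item[(c)] Since $f$ lifts against $p$, there is $s:B\to C$ with $sf=i$ and $ps=\id_B$.
\item[(d)] The diagram with rows $A\overset{\id}{\to}A\overset{\id}{\to}A$ and $B\overset{s}{\to}C\overset{p}{\to}B$ and vertical maps $f,i,f$ commutes. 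It therefore exhibits $f$ as a retract of $i$.
\item[(e)] By (M4), cofibrations are closed under retracts, so $f\in Cof$.
\end{itemize}
The equality $TFib=Cof^{\boxslash}$ is the dual argument. Given $p\in Cof^{\boxslash}$, factor $p=qj$ with $j$ a cofibration and $q$ a trivial fibration. Lift in the square with left edge $j$ and right edge $p$ to get $r$ with $rj=\id$ and $pr=q$. This displays $p$ as a retract of $q$. Then $q\in TFib$ gives $p\in TFib$, using that both $Fib$ and $We$ are closed under retracts.

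\emph{Part (2).} This is the same argument with the roles swapped. Factor via the other half of (M2) as $f=pi$ with $i\in TCof$ and $p\in Fib$. Then $f\in{}^{\boxslash}Fib$ is a retract of $i$, hence lies in $TCof$. Dually, a map with the right lifting property with respect to trivial cofibrations is a retract of a fibration, hence lies in $Fib$. In each case retract closure of $Cof$, $Fib$ and $We$ from (M4) is what is needed.

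\emph{Part (3).} A composite of a trivial cofibration and a trivial fibration is a composite of two weak equivalences, so it is a weak equivalence by (M3). Conversely, let $w\in We$. Factor $w=pi$ with $i\in TCof$ and $p\in Fib$ using (M2). Since $w$ and $i$ are weak equivalences, (M3) gives $p\in We$, so $p\in TFib$.

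The only step requiring care is setting up the retract diagrams correctly in (d) and its dual, and checking that each square commutes. The remaining steps are formal.
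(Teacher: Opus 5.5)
Your proof is correct and is the standard argument from Hovey's book, which the paper cites rather than reproving: the retract argument (factor, lift against the factor, exhibit the map as a retract, apply (M4)) gives (1) and (2), and the factorization axiom (M2) together with two-out-of-three (M3) gives (3). All the squares and retract diagrams you set up commute as claimed.
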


\begin{lemma}\label{2.7}
Let $\mathcal{M}=(Cof,We,Fib)$ be a model structure on the category $\mathcal{A}$.
		\begin{itemize}
		\item[(1)]
		All the classes $Cof$, $TCof$, $Fib$, $TFib$ and $We$ are closed under composition and contain isomorphisms.
		\item[(2)]
		Morphisms in $Cof$ and $TCof$ are closed under push out, meaning that for a given push out square
		\[\begin{tikzcd}
	    \bullet & \bullet \\
	    \bullet & \bullet
	    \arrow[from=1-1, to=1-2]
	    \arrow["f"', from=1-1, to=2-1]
	    \arrow["{f'}", from=1-2, to=2-2]
	    \arrow[from=2-1, to=2-2]
        \end{tikzcd}\]
        if $f$ is a (trivial) cofibration, then so is $f'$.
		\item[(3)]
		Dually, morphisms in $Fib$ and $TFib$ are closed under pull back.
		\end{itemize}
	\begin{proof}
	We prove that for any class of morphisms $\mathcal{S}$, ${}^{\boxslash}\mathcal{S}$ is closed under push out. This together with Lemma \ref{2.6} proves $(2)$. We will use this argument in the the future. $(3)$ is dual of $(2)$ and $(1)$ is easy to proof.
	
	Let $f\in{}^{\boxslash}\mathcal{S}$ and $f'$ be a push out of $f$. We want to prove that $f'\in{}^{\boxslash}\mathcal{S}$. Given the commutative diagram
	\[\begin{tikzcd}
	A & C & X \\
	B & D & Y
	\arrow["a", from=1-1, to=1-2]
	\arrow["f"', from=1-1, to=2-1]
	\arrow["r", from=1-2, to=1-3]
	\arrow["{f'}"', from=1-2, to=2-2]
	\arrow["s", from=1-3, to=2-3]
	\arrow["b"', from=2-1, to=2-2]
	\arrow["t"', from=2-2, to=2-3]
\end{tikzcd}\]
with $s\in\mathcal{S}$, because $f\in {}^{\boxslash}\mathcal{S}$, there is a morphism $\lambda_1:B\rightarrow X$ such that $\lambda_1 f=ra$ and $tb=s\lambda_1$.
Then, because the left-hand square is push out, there is a morphism $\lambda:D\rightarrow X$ such that $\lambda f'=r$ and $\lambda b=\lambda_1$. Finally, using the universal property of push out we can prove that $s\lambda=t$. So, $\lambda$ is a lift for the right-hand square.
\end{proof}
\end{lemma}
	
A model structure is the data of three classes of morphism $Cof$, $Fib$ and $We$ (and two other related classes $TCof$ and $TFib$). When $\mathcal{A}$ is an abelian category, we can determine “good" model structures by some classes of objects, and this makes the understanding of the model structure and its homotopy category much simpler.
So let start with the following definition, which is the first step toward this goal.
	
\begin{definition}$($\cite[Page 88]{G}$)$\label{2.8}
Let $\mathcal{M}=(Cof,We,Fib)$ be a model structure on the category $\mathcal{A}$ and $A\in\mathcal{A}$.
\begin{itemize}
\item[(1)]
We say $A$ is {\it trivial} if $0\rightarrow A$ is a weak equivalence.
By the $2$ out of $3$ Axiom, we see that this is equivalent to saying that $A\rightarrow 0$ is a weak
equivalence. The class of trivial objects is denoted by $\mathpzc{W}$.
\item[(2)] 
We say $A$ is {\it cofibrant} if $0\rightarrow A$ is a cofibration. The class of cofibrant objects is denoted by $\mathpzc{C}$.
\item[(3)]
We say $A$ is {\it fibrant} if $A\rightarrow 0$ is a fibration. The class of fibrant objects is denoted by $\mathpzc{F}$.
\item[(4)]
We say $A$ is {\it trivially cofibrant} if it is both trivial and cofibrant. In other
words, $0\rightarrow A$ is a trivial cofibration. The class of trivially cofibrant objects is denoted by $\mathpzc{TC}$. 
\item[(5)]
We say $A$ is {\it trivially fibrant} if it is both trivial and cofibrant. In other
words, $A\rightarrow 0$ is a trivial fibration. The class of trivially fibrant objects is denoted by $\mathpzc{TF}$. 
\end{itemize}
\end{definition}

The following lemma is a direct consequence of Lemma \ref{2.7}, because kernel is pull back along the zero morphism, and cokernel is push out along the zero morphism.

\begin{lemma}\label{2.9}
\begin{itemize}
\item[(1)]
The cokernel of any (trivial) cofibration is a (trivially) cofibrant object.
\item[(2)]
The kernel of any (trivial) fibration is a (trivially) fibrant object.
\end{itemize}
\end{lemma}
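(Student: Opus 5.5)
Both parts follow from Lemma \ref{2.7} once kernels and cokernels are written as pullbacks and pushouts, as the remark before the statement says. We check that the zero object of $\mathcal{A}$ is both initial and terminal, so each object in Definition \ref{2.8} is defined by a map to or from $0$.

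For (1), let $f:A\rightarrow B$ be a (trivial) cofibration with cokernel $\pi:B\rightarrow \Cok f$. The square
\[\begin{tikzcd}
	A & 0 \\
	B & {\Cok f}
	\arrow[from=1-1, to=1-2]
	\arrow["f"', from=1-1, to=2-1]
	\arrow[from=1-2, to=2-2]
	\arrow["\pi"', from=2-1, to=2-2]
\end{tikzcd}\]
is a pushout. To see this, take maps $u:B\rightarrow T$ and $v:0\rightarrow T$ with $uf=v\circ 0=0$. Then $u$ factors uniquely through $\pi$ by the universal property of the cokernel. The morphism out of $0$ is unique, so nothing further needs checking. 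By Lemma \ref{2.7}(2), the pushout $0\rightarrow \Cok f$ of $f$ is again a (trivial) cofibration. Hence $\Cok f$ is (trivially) cofibrant by Definition \ref{2.8}(2),(4).

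For (2) we argue dually. If $p:E\rightarrow B$ is a (trivial) fibration with kernel $\iota:\Ker p\rightarrow E$, then the square with top row $\iota$, left vertical map $\Ker p\rightarrow 0$, right vertical map $p$ and bottom row $0\rightarrow B$ is a pullback. This holds because a map $w:T\rightarrow E$ with $pw=0$ factors uniquely through $\iota$. Lemma \ref{2.7}(3) then says $\Ker p\rightarrow 0$ is a (trivial) fibration. So $\Ker p$ is (trivially) fibrant; here we read ``trivially fibrant'' in Definition \ref{2.8}(5) as trivial and fibrant, which is evidently intended.

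There is no real obstacle. The only point to check is the pushout and pullback identification above, and it uses only that $0$ is a zero object and the universal properties of kernels and cokernels.
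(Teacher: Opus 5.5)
Your proposal is correct and follows the paper's route: the paper also derives this directly from Lemma \ref{2.7}, viewing the cokernel as the pushout of $f$ along $A\rightarrow 0$ and the kernel as the pullback of $p$ along $0\rightarrow B$. You simply spell out the universal-property checks the paper leaves implicit, and you are right to read Definition \ref{2.8}(5) as ``trivial and fibrant'', which matches its own ``in other words'' clause.
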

	
It turns out that the pairs $(\mathpzc{C},\mathpzc{TF})$ and  $(\mathpzc{TC},\mathpzc{F})$ have some properties similar to cotorsion pairs. Indeed, if we consider the factorizations for the morphisms $0\rightarrow A$ and $A\rightarrow 0$
\[\begin{tikzcd}
& {X_A} &&& {Y^A} & \\
0 && A & A && 0 \\
& {C_A} &&& {F^A}
\arrow["{\phi_A}", from=1-2, to=2-3]
\arrow["p", from=1-5, to=2-6]
\arrow["i", from=2-1, to=1-2]
\arrow[from=2-1, to=2-3]
\arrow["j"', from=2-1, to=3-2]
\arrow["{\psi^A}", from=2-4, to=1-5]
\arrow[from=2-4, to=2-6]
\arrow["{g^A}"', from=2-4, to=3-5]
\arrow["{f_A}"', from=3-2, to=2-3]
\arrow["q"', from=3-5, to=2-6]
\end{tikzcd}\]
with respect to weak factorization systems $(Cof,TFib)$ and $(TCof,Fib)$ (i.e. $i, \psi^A\in TCof$, $\phi_A, p\in Fib$, $j, g^A\in Cof$ and $f_A, q\in TFib$), we have:

	\begin{proposition}$($\cite[VII, Proposition 2.1]{BR}$)$\label{2.10}
	Let $\mathcal{M}=(Cof,We,Fib)$ be a model structure on $\mathcal{A}$.
	\begin{itemize}
	\item[(1)]
	For any object $A\in \mathcal{A}$ there exists a short exact sequence
	\begin{equation*}
	0\rightarrow Y_A\rightarrow C_A\overset{f_A}{\longrightarrow}A
	\end{equation*}
	where $f_A$ is a trivial fibration and a right $\mathpzc{C}$-approximation, and $Y_A\in\mathpzc{TF}$.
	In particular, $\mathpzc{C}$ is a contravariantly finite subcategory.
	\item[(2)]
	For any object $A\in \mathcal{A}$ there exists a short exact sequence
	\begin{equation*}
	0\rightarrow F_A\rightarrow X_A\overset{\phi_A}{\longrightarrow}A
	\end{equation*}
	where $\phi_A$ is a fibration and a right $\mathpzc{TC}$-approximation and $F_A\in\mathpzc{F}$.
	In particular, $\mathpzc{TC}$ is a contravariantly finite subcategory.
	\item[(3)]
	For any object $A\in \mathcal{A}$ there exists a short exact sequence
	\begin{equation*}
	A\overset{g^A}{\longrightarrow} F^A\rightarrow X^A\rightarrow 0
	\end{equation*}
	where $g^A$ is a cofibration and a left $\mathpzc{TF}$-approximation and $X^A\in\mathpzc{C}$.
	In particular, $\mathpzc{TF}$ is a covariantly finite subcategory.
	\item[(4)]
	For any object $A\in \mathcal{A}$ there exists a short exact sequence
	\begin{equation*}
	A\overset{\psi^A}{\longrightarrow} Y^A\rightarrow C^A\rightarrow 0
	\end{equation*}
	where $\psi^A$ is a trivial cofibration and a left $\mathpzc{F}$-approximation and $C^A\in\mathpzc{TC}$.
	In particular, $\mathpzc{TF}$ is a covariantly finite subcategory.
	\end{itemize}
	\end{proposition}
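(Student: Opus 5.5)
All four parts follow from the factorizations displayed just before the statement, combined with Lemma \ref{2.9} and the lifting properties of Lemma \ref{2.6}. I will prove (1) in detail and obtain the other parts by the same argument, swapping the weak factorization system or dualizing. Throughout I use that $\mathcal{A}$ is abelian, so kernels and cokernels exist and a kernel is the pullback along $0\rightarrow A$ (dually for cokernels).

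For (1), factor $0\rightarrow A$ as $0\overset{j}{\rightarrow} C_A\overset{f_A}{\rightarrow}A$ with $j\in Cof$ and $f_A\in TFib$. Since $j$ is a cofibration starting at $0$, $C_A\in\mathpzc{C}$. Put $Y_A=\Ker f_A$; this gives the exact sequence $0\rightarrow Y_A\rightarrow C_A\rightarrow A$. Lemma \ref{2.9}(2), applied to the trivial fibration $f_A$, gives $Y_A\in\mathpzc{TF}$.

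To see that $f_A$ is a right $\mathpzc{C}$-approximation, take $t:C'\rightarrow A$ with $C'\in\mathpzc{C}$. Form the square with $0\rightarrow C'$ on the left, $f_A$ on the right, top map $0\rightarrow C_A$ and bottom map $t$. The left map is a cofibration and $f_A\in TFib$, so (M1), or Lemma \ref{2.6}(1), gives a lift $C'\rightarrow C_A$. Its composite with $f_A$ is $t$, so $t$ factors through $f_A$.

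Part (2) is the same argument with the system $(TCof,Fib)$. Factor $0\rightarrow A$ as $0\overset{i}{\rightarrow}X_A\overset{\phi_A}{\rightarrow}A$ with $i\in TCof$ and $\phi_A\in Fib$. Then $X_A\in\mathpzc{TC}$, and $F_A=\Ker\phi_A\in\mathpzc{F}$ by Lemma \ref{2.9}(2). For the approximation property, a map $T\rightarrow A$ with $T\in\mathpzc{TC}$ lifts against $\phi_A$ because $TCof$ has the left lifting property with respect to $Fib$.

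Parts (3) and (4) are the duals. Factor $A\rightarrow 0$ as $A\overset{g^A}{\rightarrow}F^A\overset{q}{\rightarrow}0$ with $g^A\in Cof$ and $q\in TFib$, respectively as $A\overset{\psi^A}{\rightarrow}Y^A\overset{p}{\rightarrow}0$ with $\psi^A\in TCof$ and $p\in Fib$. Then $F^A\in\mathpzc{TF}$ and $Y^A\in\mathpzc{F}$. The cokernels $X^A=\Cok g^A$ and $C^A=\Cok\psi^A$ lie in $\mathpzc{C}$ and $\mathpzc{TC}$ respectively by Lemma \ref{2.9}(1).

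For the approximation property in (3), take $A\rightarrow T$ with $T\in\mathpzc{TF}$. Consider the square with $g^A$ on the left, $T\rightarrow 0$ on the right, top map $A\rightarrow T$ and bottom map $F^A\rightarrow 0$. It commutes trivially, and lifting gives the required factorization through $g^A$. The same square with $\psi^A$ on the left and an object of $\mathpzc{F}$ on the right handles (4).

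The "in particular" statements follow immediately. Note that in (4) the conclusion should read that $\mathpzc{F}$ is covariantly finite; the statement as printed says $\mathpzc{TF}$, which is presumably a typo.

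There is no real obstacle here. The only points needing care are that each lifting square actually commutes, which holds because one corner is the zero object, and that the exact sequences are merely left (respectively right) exact, since $f_A$ need not be epic.
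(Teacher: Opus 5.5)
Your proposal is correct and follows the paper's intended argument: the paper gives no separate proof, but displays exactly these factorizations of $0\rightarrow A$ and $A\rightarrow 0$ through $(Cof,TFib)$ and $(TCof,Fib)$ and defers to Beligiannis--Reiten. As in your write-up, the (co)kernels land in the right classes by Lemma \ref{2.9}, and the approximation properties come from lifting in squares with a zero corner. You are also right that the last sentence of (4) should say that $\mathpzc{F}$, not $\mathpzc{TF}$, is covariantly finite.
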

	
	The above proposition together with the following proposition, make the relation between cotorsion pairs and model structures clear.
	
	\begin{proposition}$($\cite[VIII, Lemma 3.2]{BR}$)$\label{2.11}
	\begin{itemize}
	\item[(1)]
	If any trivial fibration is epic, then: $^{\perp_1}\mathpzc{TF}\subseteq\mathpzc{C}$.
	\item[(2)]
	If any fibration is epic, then: $^{\perp_1}\mathpzc{F}\subseteq \mathpzc{TC}$.
	\item[(3)]
	If any trivial cofibration is monic, then: $\mathpzc{TC}^{\perp_1}\subseteq\mathpzc{F}$.
	\item[(4)]
	If any cofibration is monic, then: $\mathpzc{C}^{\perp_1}\subseteq\mathpzc{TF}$.
	\item[(5)]
	If any monic with cokernel in $\mathpzc{TC}$ is a trivial cofibration, then: $\Ext_{\mathcal{A}}^1(\mathpzc{TC},\mathpzc{F})=0$.
	\item[(6)]
	If any epic with kernel in $\mathpzc{TF}$ is a trivial fibration, then: $\Ext_{\mathcal{A}}^1(\mathpzc{C},\mathpzc{TF})=0$.
	\end{itemize}
	\end{proposition}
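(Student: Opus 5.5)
We prove the six items in pairs; items (2)--(4) are dual or analogous to item (1), and item (6) is analogous to item (5). Throughout we use Lemma \ref{2.6}, the factorizations fixed just before Proposition \ref{2.10}, and the lifting characterizations $Cof={}^{\boxslash}TFib$ and $TCof={}^{\boxslash}Fib$.

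For (1), let $X\in{}^{\perp_1}\mathpzc{TF}$. Factor $0\rightarrow X$ as a cofibration $j:0\rightarrow C_X$ followed by a trivial fibration $f_X:C_X\rightarrow X$. By hypothesis $f_X$ is epic, so we get a short exact sequence
\[0\rightarrow Y_X\rightarrow C_X\overset{f_X}{\longrightarrow}X\rightarrow 0.\]
Here $Y_X$ is the kernel of a trivial fibration, so $Y_X\in\mathpzc{TF}$ by Lemma \ref{2.9}. Since $\Ext^1_{\mathcal{A}}(X,Y_X)=0$, the sequence splits. Hence $X$ is a direct summand of $C_X$, so $0\rightarrow X$ is a retract of the cofibration $0\rightarrow C_X$. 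By (M4), $X\in\mathpzc{C}$.

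Item (2) is identical, using the factorization $0\rightarrow X_A\overset{\phi_A}{\rightarrow}A$ in $(TCof,Fib)$. Items (3) and (4) are dual. For (4), take $Y\in\mathpzc{C}^{\perp_1}$ and factor $Y\rightarrow 0$ as $g^Y:Y\rightarrow F^Y$ (a cofibration, hence monic by hypothesis) followed by a trivial fibration $q:F^Y\rightarrow 0$. The cokernel of $g^Y$ lies in $\mathpzc{C}$ by Lemma \ref{2.9}, so the resulting sequence splits. Then $Y\rightarrow 0$ is a retract of $F^Y\rightarrow 0$, which is a trivial fibration. Trivial fibrations are closed under retracts, being $Cof{}^{\boxslash}$, so $Y\in\mathpzc{TF}$. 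Item (3) is the same argument applied to $\psi^Y$.

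For (5), take $C\in\mathpzc{TC}$, $F\in\mathpzc{F}$, and an extension
\[0\rightarrow F\overset{i}{\rightarrow} E\overset{p}{\rightarrow} C\rightarrow 0.\]
By hypothesis $i$ is a trivial cofibration, since it is monic with cokernel in $\mathpzc{TC}$. Also $F\rightarrow 0$ is a fibration. Consider the square with top map $\id_F:F\rightarrow F$, left map $i$, right map $F\rightarrow 0$, and bottom map $E\rightarrow 0$. By (M1) there is a lift $r:E\rightarrow F$ with $ri=\id_F$, so the sequence splits. Hence $\Ext^1_{\mathcal{A}}(C,F)=0$.

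For (6), take $C\in\mathpzc{C}$, $Y\in\mathpzc{TF}$, and an extension
\[0\rightarrow Y\rightarrow E\overset{p}{\rightarrow} C\rightarrow 0.\]
Now $p$ is a trivial fibration by hypothesis, and $0\rightarrow C$ is a cofibration. Lifting in the square with left map $0\rightarrow C$, right map $p$, and bottom map $\id_C$ gives a section $s:C\rightarrow E$ with $ps=\id_C$. So the sequence splits.

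The only step needing care is the retract argument in (1)--(4). One must check that the split inclusion and projection give commutative squares in the morphism category, exhibiting $0\rightarrow X$ as a retract of $0\rightarrow C_X$, and dually $Y\rightarrow 0$ as a retract of $F^Y\rightarrow 0$. This holds because all squares involve a zero object, so it is routine.
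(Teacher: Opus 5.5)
Your proof is correct. The paper gives no argument of its own here; it simply cites Beligiannis--Reiten. Your route is the standard one for this lemma:
\begin{itemize}
\item[(1)--(4)] You split the approximation sequences of Proposition \ref{2.10} using the $\Ext^1$-hypothesis together with Lemma \ref{2.9}, and then conclude by closure under retracts.
\item[(5)--(6)] You lift an identity map against a (trivial) fibration.
\end{itemize}
This matches the splitting-by-lifting arguments the paper itself uses in the proofs of Theorems \ref{2.13} and \ref{3.20}.
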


   The following lemma, known as Lifting-Extension Lemma, relating vanishing of $\Ext$ and lifting property is crucial.
	
	\begin{lemma}$($\cite[VII, Lemma 3.1]{BR}$)$\label{2.12}
	For two object $X$ and $Y$ in the abelian category $\mathcal{A}$ the following are equivalent.
	\begin{itemize}
	\item[(1)]
	$\Ext_{\mathcal{A}}^1(X,Y)=0$.
	\item[(2)]
	For any commutative diagram with exact columns
	\[\begin{tikzcd}
	& 0 \\
	0 & Y \\
	A & C \\
	B & D \\
	X & 0 \\
	0
	\arrow[from=1-2, to=2-2]
	\arrow[from=2-1, to=3-1]
	\arrow[from=2-2, to=3-2]
	\arrow[from=3-1, to=3-2]
	\arrow[from=3-1, to=4-1]
	\arrow[from=3-2, to=4-2]
	\arrow[dashed, from=4-1, to=3-2]
	\arrow[from=4-1, to=4-2]
	\arrow[from=4-1, to=5-1]
	\arrow[from=4-2, to=5-2]
	\arrow[from=5-1, to=6-1]
\end{tikzcd}\]
the dashed arrow exists, making both triangles commutative.
	\end{itemize}
	\end{lemma}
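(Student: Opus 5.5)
We prove the Lifting-Extension Lemma through the Yoneda description of $\Ext^1_{\mathcal{A}}(X,Y)$. In the diagram, the left column is a short exact sequence $0\to A\overset{i}{\to} B\to X\to 0$ and the right column is a short exact sequence $0\to Y\to C\overset{p}{\to} D\to 0$. The horizontal maps are $a:A\to C$ and $b:B\to D$ with $pa=bi$. We want $\lambda:B\to C$ with $\lambda i=a$ and $p\lambda=b$.

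For (1)$\Rightarrow$(2) we solve the two conditions one at a time. First, pull back the right column along $b$. This gives a short exact sequence $0\to Y\to E\to B\to 0$ with $E=B\times_D C$, and a lift of $b$ through $p$ is the same as a splitting of this sequence. We have no vanishing of $\Ext^1(B,Y)$, so we use the left column to reduce to $X$.

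The pair $(i,a)$ defines a morphism $A\to E$, since $pa=bi$. Its composite with $E\to B$ is $i$, which is monic. Apply $\Hom(-,Y)$ to $0\to A\to B\to X\to 0$ to get the exact sequence
\[\Hom_{\mathcal{A}}(B,Y)\to\Hom_{\mathcal{A}}(A,Y)\to \Ext^1_{\mathcal{A}}(X,Y)=0 .\]
Hence every morphism $A\to Y$ extends along $i$.

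Now take any candidate lift $\lambda_0:B\to C$ with $p\lambda_0=b$, and correct it. The difference $a-\lambda_0 i$ satisfies $p(a-\lambda_0 i)=pa-bi=0$, so it factors as $A\to Y\to C$. Extend the map $A\to Y$ along $i$ to some $h:B\to Y$, and set $\lambda=\lambda_0+(Y\to C)\circ h$. Then $p\lambda=b$ and $\lambda i=a$.

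The main obstacle is producing $\lambda_0$ in the first place, since we need the pulled-back sequence over $B$ to split. We handle it by a pushout along $i$. Push $0\to A\to B\to X\to 0$ out along the map $A\to E$. Since $E$ maps to $B$ compatibly, we get a comparison. More cleanly, the extension class of the pulled-back sequence, restricted along $i$, is split by the map $A\to E$. By the long exact sequence
\[\Ext^1_{\mathcal{A}}(X,Y)\to \Ext^1_{\mathcal{A}}(B,Y)\to\Ext^1_{\mathcal{A}}(A,Y),\]
the class therefore comes from $\Ext^1_{\mathcal{A}}(X,Y)=0$, so it vanishes. This gives $\lambda_0$, and the correction above finishes (1)$\Rightarrow$(2).

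For (2)$\Rightarrow$(1), take any extension $0\to Y\to C\to X\to 0$. Apply (2) to the square with left column $0\to 0\to X\overset{\id}{\to} X\to 0$ and right column the given extension, with $D=X$ and $b=\id_X$. The lift is a section of $C\to X$, so the extension splits and $\Ext^1_{\mathcal{A}}(X,Y)=0$.
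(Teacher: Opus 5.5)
Your proof is correct. The paper does not prove this lemma itself; it is quoted from Beligiannis--Reiten, so there is no argument of the authors' to compare with line by line.

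The closest argument in the paper is the computation in the proof of Proposition~\ref{4.4}. It has exactly your shape: first lift the bottom map through the epimorphism, then repair the upper triangle by adding a map that factors through the kernel. There the first step is immediate, because the middle object already has vanishing $\Ext^1$ into $Y$. In your situation $\Ext^1_{\mathcal{A}}(B,Y)$ need not vanish, and you handle this correctly. The pulled-back class in $\Ext^1_{\mathcal{A}}(B,Y)$ is killed by restriction along $i$, since $(i,a)\colon A\to E$ splits it, so it comes from $\Ext^1_{\mathcal{A}}(X,Y)=0$. The correction step then uses the surjectivity of $\Hom_{\mathcal{A}}(B,Y)\to\Hom_{\mathcal{A}}(A,Y)$. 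The sentence about pushing out along $A\to E$ and ``getting a comparison'' plays no role and should be deleted; the ``more cleanly'' argument is what does the work.

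A more hands-on route avoids both the long exact sequence and the correction step:
\begin{enumerate}
\item Push the left column out along $a$ to get $0\to C\overset{j}{\to}P\to X\to 0$, with induced map $q\colon P\to D$.
\item Pull $p$ back along $q$ to get $0\to Y\to E\to P\to 0$.
\item Divide $E$ by the copy of $C$ embedded via $(j,1_C)$. The quotient is an extension of $X$ by $Y$.
\item A splitting of that extension gives directly a section of $E\to P$ that restricts to $(j,1_C)$ on $C$. This yields a retraction $P\to C$ of $j$ compatible with $p$ and $q$, and composing with $B\to P$ gives the lift.
\end{enumerate}
That version uses only ``every extension of $X$ by $Y$ splits''. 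Yours relies on exactness of the Yoneda $\Ext$ sequence at $\Ext^1_{\mathcal{A}}(B,Y)$. This is standard in any abelian category, but it is an extra ingredient; in exchange your argument is shorter and more transparent.

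Your proof of (2)$\Rightarrow$(1) is the standard one.
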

	
	Now, we can state the following theorem from \cite{BR,CLZ}, which makes the relation between model structures and cotorsion pairs precise.
	
	\begin{theorem}\label{2.13}
	Let $\mathcal{M}=(Cof,We,Fib)$ be a model structure on $\mathcal{A}$.
	\begin{itemize}
	\item[(1)] The following are equivalent.
          \begin{itemize}
          \item[(i)]
          $(\mathpzc{C},\mathpzc{TF})$ is a cotorsion pair.
          \item[(ii)]
          Any cofibration is monic, and
	   \[TFib=\{\text{epics with kernel in } \mathpzc{TF}\}.\]
          \end{itemize}
	\item[(2)] The following are equivalent.
       \begin{itemize}
       \item[(i)]
       $(\mathpzc{TC},\mathpzc{F})$ is a cotorsion pair.
       \item[(ii)]
       Any fibration is epic, and
	   \[TCof=\{\text{monics with cokernel in } \mathpzc{C}\}.\]
       \end{itemize}
	\end{itemize}
    \begin{proof}
    Let sketch the proof of $(1)$.
    First note that because cofibrations and trivial fibrations are closed under retracts, $\mathpzc{C}$ and $\mathpzc{TF}$ are closed under direct summands.
  
  $(ii\Rightarrow i):$ Given a short exact sequence $0\rightarrow Y\rightarrow E\overset{f}{\rightarrow} X\rightarrow 0$, with $X\in\mathpzc{C}$ and $Y\in\mathpzc{TF}$, by $(ii)$ $f$ is a trivial fibration. So $0\rightarrow X$ has left lifting property with respect to $f$. This means that the short exact sequence splits, proving $\Ext_{\mathcal{A}}^1(\mathpzc{C},\mathpzc{TF})=0$.
The existence of approximation sequences follows from Proposition \ref{2.10} and $(ii)$.
    
$(ii\Rightarrow i):$Now assume that $(\mathpzc{C},\mathpzc{TF})$ is a cotorsion pair, and $p:M\rightarrow N$ be a trivial fibration. Take $f:C\rightarrow N$ to be a right $\mathpzc{C}$-approximation for $N$, which is epic. Then the lift for the following square proves that $p$ is also epic.
    \[\begin{tikzcd}
	0 & M \\
	C & N
	\arrow[from=1-1, to=1-2]
	\arrow[from=1-1, to=2-1]
	\arrow["p", from=1-2, to=2-2]
	\arrow[dashed, from=2-1, to=1-2]
	\arrow["f", from=2-1, to=2-2]
\end{tikzcd}\]

 Similarly, one can prove that cofibrations are monomorphism. It remains to prove that if $p:M\rightarrow N$ is an epimorphism with  $\ker(p)\in\mathpzc{TF}$, then $p$ is a trivial fibration. For that purpose, by Lemma \ref{2.6}, we need to prove that $p$ has right lifting property with respect to all cofibrations. But this follows easily from Lifting-Extension Lemma.
    \end{proof}
	\end{theorem}
	
Motivated by the above theorem, one may want to define an special type of model structures that can be parametrized by cotorsion pairs.
Not that, these model structures were called projective model structure in \cite{BR}, and were called weakly projective model structure in \cite{CLZ}. We use the former terminology because in the next section we will define weak projective model structure by weakening this definition, see Definition \ref{3.2}.
	
	\begin{definition}\label{2.14}
	Let $\mathcal{M}=(Cof,We,Fib)$ be a model structure on $\mathcal{A}$.
	$\mathcal{M}$ is called a {\it projective} model structure, if
	     \begin{itemize}
	     \item[(1)]
	     Cofibrations are monic.
	     \item[(2)]
	     $TFib=\{\text{epics with kernel in }\mathpzc{TF}\}$.
	     \item[(3)]
	     Any object is fibrant, i.e. $\mathpzc{F}=\mathcal{A}$.
	     \end{itemize}
	\end{definition}

\begin{remark}\label{2.15}
Let $\mathcal{M}=(Cof,We,Fib)$ be a projective model structure on abelian category $\mathcal{A}$. By definition, any cofibration is monic, and by Lemma \ref{2.9} the cokernel is in $\mathpzc{C}$.
Conversely, by Lifting-Extension Lemma, any monic whith a cofibrant cokernel has left lifting property with respect to all trivial fibrations. So, it is indeed a cofibration by Lemma \ref{2.6}. Thus, $Cof=\{\text{monics with cokernel in }\mathpzc{C}\}$.
\end{remark}

Now we can state the correspondence of Beligiannis and Reiten.
This correspondence has been generalized to weakly idempotent complete exact and extriangulated categories \cite{CLZ,HZZZ}.

\begin{theorem}\label{2.16}
Let $\mathcal{A}$ be an abelian category.
There is a bijective correspondence between projective model structures on $\mathcal{A}$ an hereditary cotorsion pairs in $\mathcal{A}$ with a contravariantly finite core.
Under this bijection, the projective model structure $(Cof,We,Fib)$ is sent to the cotorsion pair $(\mathpzc{C},\mathpzc{TF})$.
\end{theorem}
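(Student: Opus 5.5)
The correspondence is built in both directions, and then the two maps are checked to be mutually inverse.

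\emph{From model structures to cotorsion pairs.} Let $\mathcal{M}=(Cof,We,Fib)$ be a projective model structure. By Definition \ref{2.14}, cofibrations are monic and $TFib$ is the class of epics with kernel in $\mathpzc{TF}$. Hence Theorem \ref{2.13}(1) shows that $(\mathpzc{C},\mathpzc{TF})$ is a cotorsion pair.

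For heredity I use Lemma \ref{2.2}(2). Let $0\to K\to X\overset{p}{\to} X'\to 0$ be exact with $X,X'\in\mathpzc{C}$. By Remark \ref{2.15}, $0\to X'$ and $0\to X$ are cofibrations. The same remark describes $Cof$ as the monics with cofibrant cokernel. I would try to realize $K$ as a pullback or cokernel of a cofibration and then apply Lemma \ref{2.9}. If that fails, I would prove $\Ext^2(\mathpzc{C},\mathpzc{TF})=0$ using the approximation sequences. This is where the hereditary hypothesis has to be extracted; \cite{CLZ} notes it is genuinely needed.

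For the core $\omega=\mathpzc{C}\cap\mathpzc{TF}=\mathpzc{TC}$, I use that every object is fibrant. Proposition \ref{2.10}(2) gives a right $\mathpzc{TC}$-approximation of every object, so $\omega$ is contravariantly finite. Along the way I check that trivial cofibrant objects are exactly $\mathpzc{C}\cap\mathpzc{TF}$. One inclusion follows from the lifting of $0\to X$ against $X\to 0$. For the other, an object in $\mathpzc{C}\cap\mathpzc{TF}$ has $0\to X$ a cofibration and $X\to 0$ a trivial fibration, so $0\to X$ is a weak equivalence by 2-out-of-3.

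\emph{From cotorsion pairs to model structures.} Given a hereditary cotorsion pair $(\mathcal{X},\mathcal{Y})$ with contravariantly finite core $\omega$, define:
\begin{itemize}
\item $Cof$ as the monics with cokernel in $\mathcal{X}$;
\item $Fib$ as the morphisms $f\colon A\to B$ such that $\Hom(W,f)$ is surjective for all $W\in\omega$;
\item $We$ as the morphisms that factor as a monic with cokernel in $\omega$ followed by an epic with kernel in $\mathcal{Y}$.
\end{itemize}

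Next I verify the axioms.
\begin{itemize}
\item Lifting: $TFib$ should be the epics with kernel in $\mathcal{Y}$, and $TCof$ the monics with cokernel in $\omega$. Lifting then follows from Lemma \ref{2.12} and the approximation property of $\omega$.
\item Factorization of $f\colon A\to B$: form $(f,\pi)\colon A\oplus X_B\to B$ with $X_B\to B$ the $\mathcal{X}$-approximation, then use cotorsion pair approximation sequences and pushouts. For the $(TCof,Fib)$ factorization, use $A\to A\oplus W_B\to B$ with $W_B\to B$ an $\omega$-approximation.
\item Retracts: these are routine.
\end{itemize}

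\emph{Main obstacle.} I expect 2-out-of-3 to be the main obstacle. The plan is to show that the weak equivalences are exactly the maps that become isomorphisms in the stable category $\mathcal{A}/\omega$, or equivalently are detected by $\Hom(\omega,-)$ together with the $\mathcal{Y}$-part. For this, heredity is used to show that $\mathcal{Y}$ is closed under cokernels of monics (Lemma \ref{2.2}(3)) and that $\omega$ consists of the projectives of the Frobenius-like exact category $\mathcal{X}$.

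\emph{Bijectivity.} Starting from $(\mathcal{X},\mathcal{Y})$, the cofibrant objects are $\mathcal{X}$ and the trivially fibrant objects are $\mathcal{Y}$, so the cotorsion pair is recovered. Starting from $\mathcal{M}$, Remark \ref{2.15} together with Definition \ref{2.14} determines $Cof$ and $TFib$. These classes determine $Fib$ and $We$ by Lemma \ref{2.6}, so $\mathcal{M}$ is recovered from $(\mathpzc{C},\mathpzc{TF})$.
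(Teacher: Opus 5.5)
Your first direction (Theorem~\ref{2.13} for the cotorsion pair, Proposition~\ref{2.10}(2) for the core) and your recovery of $\mathcal{M}$ from $Cof$ and $TFib$ are fine. There are genuine gaps at heredity and in the converse construction.

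\emph{Heredity.} Neither of your two routes is an actual argument. The ingredients you name are approximation sequences, Lemma~\ref{2.9}, and the descriptions $Cof=$ monics with cokernel in $\mathpzc{C}$ and $TFib=$ epics with kernel in $\mathpzc{TF}$. All of these hold equally for the analogous classes built from a non-hereditary cotorsion pair. So they cannot by themselves give $\Ext^2(\mathpzc{C},\mathpzc{TF})=0$ or put $K$ into $\mathpzc{C}$. What forces heredity is the 2-out-of-3 axiom together with $\mathpzc{F}=\mathcal{A}$, and you never use it. The missing step is Proposition~\ref{3.5}(1), which the paper combines with Lemma~\ref{2.2} in Corollary~\ref{3.21}:
\begin{itemize}
\item Let $0\to Y\to Y'\to C\to 0$ be exact with $Y,Y'\in\mathpzc{TF}$.
\item Then $Y'\to C$ is epic with kernel in $\mathpzc{TF}$, hence a trivial fibration and in particular a weak equivalence.
\item Since $Y'\to 0$ is a weak equivalence, so is $C\to 0$ by 2-out-of-3. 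Thus $C$ is trivial, and being fibrant, $C\in\mathpzc{TF}$.
\item Lemma~\ref{2.2}(3) now gives heredity.
\end{itemize}

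\emph{Converse construction.} Your $We$, and the claim that $TCof$ consists of all monics with cokernel in $\omega$, are wrong. Every map $A\to 0$ is $\omega$-epic, so it lies in your $Fib$. Hence a trivial cofibration $i\colon A\to B$ must admit a lift in the square with $\mathrm{id}_A$ on top and $B\to 0$ at the bottom, i.e.\ $i$ must split. Now take a non-split $0\to A\to B\to W\to 0$ with $W\in\omega$. By your definitions it is a cofibration (cokernel in $\omega\subseteq\mathcal{X}$) and a weak equivalence (it followed by $\mathrm{id}_B$). So the lifting axiom fails.

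Such sequences exist, for example for the hereditary cotorsion pair $(\modd\text{-}\Lambda,\add D\Lambda)$ with $\Lambda$ not self-injective: take $W$ injective but not projective. Lemma~\ref{2.12} only handles lifting against epics, and $\omega$-epics need not be epic. The correct choice, as in the definitions preceding Proposition~\ref{3.8}, is $TCof=$ split monics with cokernel in $\omega$ and $We=TFib\circ TCof$.

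Your plan for 2-out-of-3 also rests on a false characterization. For $Y\in\mathcal{Y}\setminus\omega$, the map $Y\to 0$ lies in $TFib$ and so is a weak equivalence. But it is not an isomorphism in $\mathcal{A}/\omega$, because $\omega$ is closed under summands. Also, $\omega$ consists of the injectives of the exact category $\mathcal{X}$, not its projectives.

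What works is Lemma~\ref{3.13}: $f\colon A\to B$ is a weak equivalence iff $(f,t)\colon A\oplus W\to B$ is epic with kernel in $\mathcal{Y}$ for a right $\omega$-approximation $t$. 2-out-of-3 then follows as in Propositions~\ref{3.15}--\ref{3.18}. Heredity ($\mathcal{Y}$ closed under cokernels of monics) enters there, for instance in Lemma~\ref{3.17}.
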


In the next section, we prove a generalization of Theorem \ref{2.16}. And as a corollary, we can recover this theorem. See Corollary \ref{3.21}.

	
\section{Weak projective model structures and left weak cotorsion pairs}
In this section we revise Beligiannis-Reiten correspondence, stated in Theorem \ref{2.16}.
Let start from some motivations.
From Proposition \ref{2.10} we know that approximation sequences obtained from a given model structure fail to be short exact sequences in general. This remind us of the notion of weak cotorsion pairs \cite{BZ}, a notion originated in $\tau$-tilting theory \cite{AIR}.
So, one may wonder, if in Beligiannis-Reiten correspondence, we constitute cotorsion pairs with weak cotorsion pairs, what kind of model structures we get?
We will give an affirmant answer to the question in this section. Let start from the definition of left weak cotorsion pair.

\begin{definition}$($\cite{BZ}$)$\label{3.1}
A pair $(\mathcal{X},\mathcal{Y})$ of subcategories of $\mathcal{A}$ is called a {\it left weak cotorsion pair} ({\it $lw$-cotorsion pair} for short) if
    \begin{itemize}
    \item[(1)]
    $\mathcal{X}$ and $\mathcal{Y}$ are closed under direct summands. 
    \item[(2)]
    $\Ext^1(\mathcal{X},\mathcal{Y})=0$.
    \item[(3)]
    For each $A\in \mathcal{A}$, there exist short exact sequences
    \begin{align}
     A\overset{g^A}{\longrightarrow} Y^A\longrightarrow X^A\rightarrow 0\label{eq3.1}\\
    0\rightarrow Y_A\longrightarrow X_A\overset{f_A}{\longrightarrow} A\rightarrow\label{eq3.2} 0
    \end{align}
    where $X_A,X^A\in \mathcal{X}$ and $Y_A,Y^A\in \mathcal{Y}$. And $g^A$ is a left $\mathcal{Y}$-approximation for $A$. 
\end{itemize}
The notion of {\it right weak cotorsion pair} is defined dually. 
\end{definition}

Motivated by the question in the beginning of this section we are interested in the model structures given by the following definition. The reader is also recommended to compare it with Definition \ref{2.14}.
\begin{definition}\label{3.2}
	Let $\mathcal{M}=(Cof,We,Fib)$ be a model structure on abelian category $\mathcal{A}$.
	$\mathcal{M}$ is called a {\it weak projective model structure}, if
	     \begin{itemize}
	     \item[(1)]
	     $TFib=\{\text{epics with kernel in }\mathpzc{TF}\}$.
	     \item[(2)]
	     Any object is fibrant, i.e. $\mathpzc{F}=\mathcal{A}$.
	     \end{itemize}
	\end{definition}

\begin{remark}\label{3.3}
Note that a projective model structures are exactly weak projective model structure such that cofibrations are all monic.
\end{remark}

\subsection{From weak projective model structures to left weak cotorsion pairs}
In this subsection, we show that any weak projective model structure induces a $lw$-cotorsion pair, and examine the characteristics of these $lw$-cotorsion pairs.

\begin{proposition}\label{3.4}
   Let $\mathcal{M}=(Cof,We,Fib)$ be a weak projective model structure on $\mathcal{A}$. Then $(\mathpzc{C},\mathpzc{TF})$ is a $lw$-cotorsion pair.
   \begin{proof}
   From Proposition \ref{2.11} we have that $\Ext_{\mathcal{A}}^1(\mathpzc{C},\mathpzc{TF})=0$ and the existence of approximation sequences follows from Proposition \ref{2.10} and the assumption that trivial fibrations are epic.
   \end{proof}
   \end{proposition}  
	
	\begin{proposition}\label{3.5}
   Let $\mathcal{M}=(Cof,We,Fib)$ be a weak projective model structure on $\mathcal{A}$. Then the $lw$-cotorsion pair $(\mathpzc{C},\mathpzc{TF})$ satisfies the following conditions.
   \begin{itemize}
   \item[(1)]
   $\mathpzc{TF}$ is closed under extensions, and cokernel of monics.
   \item[(2)]
   The core is contravariantly finite.
   \item[(3)]
   For each $A\in\mathcal{A}$, the approximation sequence \eqref{eq3.1} can be chosen in such a way that $g^A$ has left lifting property with respect to all epicss with kernel in $\mathpzc{TF}$ (=trivial fibrations).
   \end{itemize}
   \begin{proof}
   Let $0\rightarrow Y_1\rightarrow E\rightarrow Y_2\rightarrow 0$ be a short exact sequence with $Y_1,Y_2\in \mathpzc{TF}$. By the definition of weak projective model structures, $E\rightarrow Y_2$ is a trivial fibration and in particular a weak equivalence. Because $Y_2\rightarrow 0$ is also a weak equivalence by assumption, $E\rightarrow 0$ is also a weak equivalence by $2$ out of $3$ axiom. This prove that $E$ is a trivial object, and since all objects are fibrant we conclude $E\in \mathpzc{TF}$. So, $\mathpzc{TF}$ is closed under extensions. Now let $0\rightarrow Y\rightarrow Y'\rightarrow C\rightarrow 0$ be a short exact sequence with $Y,Y'\in \mathpzc{TF}$. By the definition of weak projective model structures, $Y'\rightarrow C$ is a trivial fibration and in particular a weak equivalence. Again by using $2$ out of $3$ axiom we can prove that $C\in \mathpzc{TF}$. These prove $(1)$.
  
  Because all objects are fibrant, we have $\omega=\mathpzc{C}\cap\mathpzc{TF}=\mathpzc{C}\cap\mathpzc{W}\cap\mathpzc{F}=\mathpzc{C}\cap\mathpzc{W}=\mathpzc{TC}$,
  which is contravariantly finite by Proposition \ref{2.10}. This proves $(2)$.
  
  $(3)$ Follows immediately from Proposition \ref{2.10}.
   \end{proof}
   \end{proposition}
Therefore, for a $lw$-cotorsion pair $(\mathcal{X},\mathcal{Y})$ to be able to induce a model structure, it must at least meets the conditions mentioned in the above proposition. Motivated by this, we give the following definition.

\begin{definition}\label{3.6}
Let $(\mathcal{X},\mathcal{Y})$ be a $lw$-cotorsion pair.
\begin{itemize}
\item[(1)]
We say that $(\mathcal{X},\mathcal{Y})$ is a {\it hereditary $lw$-cotorsion pair}, if $\mathcal{Y}$ is closed under extensions and cokernel of monics.
\item[(2)]
We say that $(\mathcal{X},\mathcal{Y})$ is a {\it homotopicaly $lw$-cotorsion pair}, if for each $A\in\mathcal{A}$, the approximation sequence \eqref{eq3.1} can be chosen in such a way that $g^A$ has left lifting property with respect to all epics with kernel in $\mathcal{Y}$.
\item[(3)]
We say that $(\mathcal{X},\mathcal{Y})$ is a {\it homotopicaly hereditary $lw$-cotorsion pair}, or $hh$-$lw$-cotorsion pair for short, If $(\mathcal{X},\mathcal{Y})$ satisfies both conditions $(1)$ and $(2)$.
\end{itemize}
\end{definition}

\begin{remark}\label{3.7}
\begin{itemize}
\item[(1)]
From the approximation sequence \eqref{eq3.2} in the definition of $lw$-cotorsion pair, we can easily see that $\mathcal{X}={}^{\perp_1}\mathcal{Y}$, which implies that $\mathcal{X}$ is closed under extensions. But $\mathcal{Y}$ may not be closed under extensions.
However, both are automatic for cotorsion pairs, see Lemma \ref{2.3}.
\item[(2)]
Any cotorsion pair $(\mathcal{X},\mathcal{Y})$ satisfies the condition of being homotopicaly. This follows from Lifting-Extension Lemma.
But for $lw$-cotorsion pairs, being homotopicaly is not guaranteed.
Actually it is similar to the existence of approximation sequences, but it is stronger.
Assuming that in the exact sequence \eqref{eq3.1} $g^A$ has left lifting property with respect to all epics with kernel in $\mathcal{Y}$, for any $Y\in\mathcal{Y}$ and any $f:A\rightarrow Y$, the following commutative square admits a lift as suggested by the dashed arrow.
\[\begin{tikzcd}
A & Y \\
{Y^A} & 0
\arrow["f", from=1-1, to=1-2]
\arrow["{g^A}"', from=1-1, to=2-1]
\arrow[from=1-2, to=2-2]
\arrow[dashed, from=2-1, to=1-2]
\arrow[from=2-1, to=2-2]
\end{tikzcd}\]
This means that $g^A$ is a left $\mathcal{Y}$-approximation for $A$.
\item[(3)]
As we will see in Proposition \ref{4.4}, $lw$-cotorsion pairs arising from $\tau$-tilting theory satisfy both conditions of being homotopicaly and hereditary.
\end{itemize}
\end{remark}

\subsection{From $hh$-$lw$-cotorsion pairs to weak projective model structures}
In this section, for a given $hh$-$lw$-cotorsion pairs $(\mathcal{X},\mathcal{Y})$ with a contravariantly finite core, we construct a weak projective model structure.

Fix a hereditary $hh$-$lw$-cotorsion pairs $(\mathcal{X},\mathcal{Y})$, and assume that the core $\omega=\mathcal{X}\cap\mathcal{Y}$ is a contravariantly finite subcategory. We set the following notation for the rest of this subsection. Recall that a morphism $f:A\rightarrow B$ is called an {\it $\omega$-epic}, if the morphism $\Hom_{\mathcal{A}}(W,A)\rightarrow \Hom_{\mathcal{A}}(W,B)$ is epic for each $W\in\omega$.
\begin{itemize}
\item[(1)]
The class of $\omega$-trivial fibrations is defined as
\[TFib_{\omega}:=\{f\in \Mor(\mathcal{A})\mid f\text{ is epic and }\Ker(f)\in\mathcal{Y}\}.\]
\item[(2)]
The class of $\omega$-cofibrations is defined as
\[Cof_{\omega}:={}^{\boxslash}TFib_{\omega}.\]
\item[(3)]
The class of $\omega$-trivial cofibrations is defined as
\[TCof_{\omega}:=\{f\in \Mor(\mathcal{A})\mid f\text{ is a split monic and }\Cok(f)\in\omega\}.\]
\item[(4)]
The class of $\omega$-fibrations is defined as
\[Fib_{\omega}:=\{f\in \Mor(\mathcal{A})\mid f \text{ is } \omega\text{-epic}\}.\]
\item[(5)]
The class of $\omega$-weak equivalences is defined as
\[We_{\omega}:=\{f\in \Mor(\mathcal{A})\mid f=pi\text{ for some }p\in TFib_{\omega} \text{ and some } i\in TCof_{\omega}\}.\]
\end{itemize}

Our aim is to prove that $(Cof_{\omega},We_{\omega},Fib_{\omega})$ is a weak projective model structure.
We will see in Proposition \ref{3.11} that $TCof_{\omega}=Cof_{\omega}\cap We_{\omega}$ and $TFib_{\omega}=Fib_{\omega}\cap We_{\omega}$. To prove these equalities, we need the following result, which guaranties the lifting axiom.

\begin{proposition}\label{3.8}
\begin{itemize}
\item[(1)]
$\omega$-cofibrations have left lifting property with respect to $\omega$-trivial fibratios.
\item[(2)]
$\omega$-trivial cofibrations have left lifting property with respect to $\omega$-fibrations.
\end{itemize}
\begin{proof}
$(1)$ is obvious by the definition of $\omega$-cofibrations.
For $(2)$ let $f$ be a $\omega$-trivial cofibration. So we can assume without lose of generality that $f$ is the morphism $\begin{pmatrix}
1\\
0
\end{pmatrix}:A\rightarrow A\oplus W$, with $W\in\omega$. Now, since $\omega$-fibrations are $\omega$-epic, it is clear that $f$ has left lifting property with respect to $\omega$-fibrations.
\end{proof}
\end{proposition}

The following technical lemma is useful in the sequel. It is well-known also for exact categories. For a direct proof for abelian categories see \cite[Lemma 2.8]{E25}.
\begin{lemma}\label{3.9}
 Let $X\overset{f}{\rightarrow}Y\overset{g}{\rightarrow}Z$ be a pair of composable morphisms in an abelian category. Then we have the following exact sequence.
\begin{center}
 \begin{tikzpicture}
 \node (X1) at (-5,2) {$0$};
 \node (X2) at (-3,2) {$\Ker(f)$};
 \node (X3) at (0,2) {$\Ker(gf)$};
 \node (X4) at (3,2) {$\Ker(g)$};
 \node (X5) at (-3,0) {$\Cok(f)$};
 \node (X6) at (0,0) {$\Cok(gf)$};
 \node (X7) at (3,0) {$\Cok(g)$};
 \node (X8) at (5,0) {$0$};
 \draw [->,thick] (X1) -- (X2) node [midway,above] {};
 \draw [->,thick] (X2) -- (X3) node [midway,above] {};
 \draw [->,thick] (X3) -- (X4) node [midway,above] {};
 \draw [->,thick] (X5) -- (X6) node [midway,above] {};
 \draw [->,thick] (X6) -- (X7) node [midway,left] {};
 \draw [->,thick] (X7) -- (X8) node [midway,left] {};
 \draw [->,thick] (X4) to [out=0,in=180] (X5) node [midway,above] {};
 \end{tikzpicture}
\end{center}
\end{lemma}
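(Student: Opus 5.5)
The plan is to apply the snake lemma to a single commutative diagram with exact rows. Write $gf\colon X\to Z$ and consider
\[\begin{tikzcd}
	0 & X & {X\oplus Y} & Y & 0 \\
	0 & Y & {Y\oplus Z} & Z & 0
	\arrow[from=1-1, to=1-2]
	\arrow["{\begin{pmatrix}1\\ f\end{pmatrix}}", from=1-2, to=1-3]
	\arrow["f"', from=1-2, to=2-2]
	\arrow["{\begin{pmatrix}-f & 1\end{pmatrix}}", from=1-3, to=1-4]
	\arrow["{\begin{pmatrix}f & 0\\ 0 & g\end{pmatrix}}", from=1-3, to=2-3]
	\arrow["g", from=1-4, to=2-4]
	\arrow[from=1-4, to=1-5]
	\arrow[from=2-1, to=2-2]
	\arrow["{\begin{pmatrix}1\\ g\end{pmatrix}}"', from=2-2, to=2-3]
	\arrow["{\begin{pmatrix}-g & 1\end{pmatrix}}"', from=2-3, to=2-4]
	\arrow[from=2-4, to=2-5]
\end{tikzcd}\]
This diagram does not directly produce the desired sequence, so the cleaner route is the standard one. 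Set the top row to be $0\to X\xrightarrow{(1,f)^T} X\oplus Y\xrightarrow{(-f,1)} Y\to 0$ and the bottom row to be $0\to Y\xrightarrow{(g,1)^T}Z\oplus Y\xrightarrow{(1,-g)}Z\to0$. The vertical maps are $gf$ on the left, the middle map $\begin{pmatrix} 0&g\\ -1&0\end{pmatrix}$ sending $(x,y)\mapsto(gy,-x)$ (or a suitably adjusted variant), and $g$ on the right. The middle map is chosen so that its kernel is $\Ker(f)$ and its cokernel is $\Cok(f)$. I will adjust the signs until both squares commute, by checking them elementwise. This is legitimate since by Freyd--Mitchell we may assume $\mathcal{A}$ is a module category.

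If the correct diagram proves fiddly to write down, the fallback is the most robust approach, which is a direct construction. Since we may work with elements, I will define the six maps and then check exactness at each spot:
\begin{itemize}
\item $\Ker f\to\Ker gf$ and $\Ker gf\to \Ker g$, both induced by inclusion and by $f$;
\item $\Cok f\to\Cok gf$, induced by $g$, and $\Cok gf\to\Cok g$, the canonical projection;
\item the connecting map $\delta\colon\Ker g\to\Cok f$, given by $y\mapsto \bar y$.
\end{itemize}
The checks then run position by position. Injectivity of $\Ker f\to\Ker gf$ is clear. Exactness at $\Ker gf$ holds because $f x=0$ exactly when $x\in\Ker f$. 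At $\Ker g$, we have $\delta(y)=0$ if and only if $y=f(x)$, and then $gf(x)=g(y)=0$, so $x\in\Ker gf$. At $\Cok f$, we have $g(y)\in \operatorname{Im} gf$ if and only if $y-f(x)\in\Ker g$ for some $x$, that is, $\bar y\in \operatorname{Im}\delta$. At $\Cok gf$, $\bar z$ maps to zero in $\Cok g$ precisely when $z=g(y)$. Finally, $\Cok gf\to\Cok g$ is surjective.

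The main obstacle is purely bookkeeping: either getting the signs in the snake diagram right, or justifying elementwise reasoning. For the latter I will invoke Freyd--Mitchell embedding (or pseudo-elements), since the statement only involves finitely many objects and the embedding is exact and faithful. Then each check above becomes the routine diagram chase outlined.
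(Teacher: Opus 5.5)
Your fallback argument is correct and complete. The six maps you list (the inclusion, the map induced by $f$, $\delta\colon\Ker g\hookrightarrow Y\twoheadrightarrow\Cok f$, the map induced by $g$, and the canonical projection) are genuine morphisms of $\mathcal{A}$ obtained from universal properties. Each of your position-by-position checks is right. Computing with elements is legitimate once you pass to a small full abelian subcategory that contains $X,Y,Z,f,g$ and is closed under kernels and cokernels in $\mathcal{A}$, apply Freyd--Mitchell there, and use that an exact faithful functor reflects exactness. The paper gives no argument of its own for this lemma; it calls it well known and points to [E25, Lemma 2.8] for a direct proof. Your chase is therefore an adequate self-contained proof.

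The snake-lemma part of your proposal, however, is wrong as written, and the problem is not signs. In your second diagram the left vertical map $gf\colon X\to Z$ cannot land in a bottom row that begins with $Y$. The middle map $(x,y)\mapsto(gy,-x)$ puts $-x\in X$ into the $Y$-summand of $Z\oplus Y$.

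More fundamentally, the arrangement is backwards. The snake lemma yields $\Ker(\text{left})\to\Ker(\text{middle})\to\Ker(\text{right})\to\Cok(\text{left})\to\cdots$. So you need $f$ on the left, $g$ on the right, and a middle map whose kernel and cokernel are $\Ker(gf)$ and $\Cok(gf)$, not $\Ker f$ and $\Cok f$. Your arrangement would produce a sequence beginning $0\to\Ker(gf)\to\Ker f$, which already fails for $f=1_X$, $g=0$, $X\neq 0$.

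A version that works: take the split rows $0\to X\to X\oplus Y\to Y\to 0$ and $0\to Y\to Y\oplus Z\to Z\to 0$ (inclusion of the first summand, projection onto the second). Use the vertical maps $f$, $\phi=\begin{pmatrix} f&-1\\ 0&g\end{pmatrix}$ and $g$. Both squares commute, and
\[
\begin{pmatrix}1&0\\ g&1\end{pmatrix}\begin{pmatrix}f&-1\\ 0&g\end{pmatrix}\begin{pmatrix}1&0\\ f&1\end{pmatrix}=\begin{pmatrix}0&-1\\ gf&0\end{pmatrix}.
\]
So, up to isomorphisms of source and target, $\phi$ is $gf\oplus(-1_Y)$. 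Hence $\Ker\phi\cong\Ker(gf)$ via $x\mapsto(x,fx)$, and $\Cok\phi\cong\Cok(gf)$ via $\overline{(y,z)}\mapsto\overline{z+gy}$. The snake lemma then gives the required sequence. This route is element-free, so Freyd--Mitchell is not needed. Under these identifications the induced maps agree with yours, except that the connecting map comes out as $-\delta$. That is irrelevant here, since Proposition~\ref{3.10} and Lemma~\ref{3.17} use only exactness.
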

\begin{proposition}\label{3.10}
The classes $Cof_{\omega}$, $TCof_{\omega}$, $Fib_{\omega}$ and $TFib_{\omega}$ are all closed under retract and composition, and contain all isomorphisms.
\begin{proof}
Because $Cof_{\omega}={}^{\boxslash}TFib_{\omega}$, it is clear that it is closed under retract and composition and contains all isomorphisms.
We prove these requirements for $TFib_{\omega}$ too, and leave the others to the reader.
First it is obvious that isomorphisms are in $TFib_{\omega}$. Also, because a retract of an epic $f$ is again epic, and its kernel is a summand of $\Ker(f)$, $TFib_{\omega}$ is closed under retract. Now assume that $f:A\rightarrow B$ and $g:B\rightarrow C$ belongs to $TFib_{\omega}$. Clearly $gf$ is epic and by Lemma \ref{3.9} we have the exact sequence
\[0\rightarrow \Ker(f)\rightarrow \Ker(gf)\rightarrow\Ker(g)\rightarrow 0.\]
Then, because $\mathcal{Y}$ is closed under extension, we have that $gf\in TFib_{\omega}$.
\end{proof}
\end{proposition}

\begin{proposition}\label{3.11}
Keeping the above notations,
\begin{itemize}
\item[(1)]
$TCof_{\omega}=Cof_{\omega}\cap We_{\omega}$.
\item[(2)]
$TFib_{\omega}=Fib_{\omega}\cap We_{\omega}$.
\end{itemize}
\begin{proof}
$TCof_{\omega}\subseteq Cof_{\omega}$ is an easy consequence of Lifting-Extension Lemma and $TCof_{\omega}\subseteq We_{\omega}$ is clear. So $TCof_{\omega}\subseteq Cof_{\omega}\cap We_{\omega}$.
Now Let $f\in Cof_{\omega}\cap We_{\omega}$.
Since $f$ is an $\omega$-weak equivalence, by definition there exists a factorization $f=pi$ where $i$ is an $\omega$-trivial cofibration and $p$ is an $\omega$-trivial fibration. Then, the commutative square
\[\begin{tikzcd}
	X & {X'} \\
	Y & Y
	\arrow["i", from=1-1, to=1-2]
	\arrow["f"', from=1-1, to=2-1]
	\arrow["p", from=1-2, to=2-2]
	\arrow["\lambda"{description}, dashed, from=2-1, to=1-2]
	\arrow["id"', from=2-1, to=2-2]
\end{tikzcd}\]
admits a lift $\lambda$, as suggested by the dashed arrow. 
Then by the commutative diagram
\[\begin{tikzcd}
	X & X & X \\
	Y & {X'} & Y
	\arrow["id", from=1-1, to=1-2]
	\arrow["f"', from=1-1, to=2-1]
	\arrow["id", from=1-2, to=1-3]
	\arrow["i"', from=1-2, to=2-2]
	\arrow["f", from=1-3, to=2-3]
	\arrow["\lambda"', from=2-1, to=2-2]
	\arrow["p"', from=2-2, to=2-3]
\end{tikzcd}\]
$f$ is a retract of $i$. Thus by Proposition \ref{3.10} $f$ is $\omega$-trivial cofibration. This proves $(1)$.

For proving $(2)$, $TFib_{\omega}\subseteq We_{\omega}$ is trivial and $TFib_{\omega}\subseteq Fib_{\omega}$ is valid because for any $\omega$-trivial fibration $f:B\rightarrow C$, we have the short exact sequence 
\[0\rightarrow Y\rightarrow B\overset{f}{\rightarrow}C\rightarrow 0,\]
with $Y\in\mathcal{Y}$. Then for every $W\in\omega$, by applying $\Hom_{\mathcal{A}}(W,-)$ to this short exact sequence we obtain that $f$ is $\omega$-epic.
The reverse inclusion $Fib_{\omega}\cap We_{\omega}\subseteq TFib_{\omega}$ is similar to that of $(1)$, and is left to the reader.
\end{proof}
\end{proposition}	
In the following proposition we prove the factorization axiom for $(Cof_{\omega},We_{\omega},Fib_{\omega})$.

\begin{proposition}\label{3.12}
\begin{itemize}
\item[(1)]
Any morphism in $\mathcal{A}$ may be factored as composition of an $\omega$-trivial cofibration followed by an $\omega$-fibration.
\item[(2)]
Any morphism may in $\mathcal{A}$ be factored as composition of an $\omega$-cofibration followed by an $\omega$-trivial fibration.
\end{itemize}
\begin{proof}
Let $f:A\rightarrow B$ be an arbitrary morphism in $\mathcal{A}$. Because $\omega$ is contravariantly finite, we can choose a right $\omega$-approximation $t:W\rightarrow B$ for $B$. Then we can factorize $f$ as in the commutative diagram
\[\begin{tikzcd}
	A && B \\
	& {A\oplus W}
	\arrow["f", from=1-1, to=1-3]
	\arrow["v"', from=1-1, to=2-2]
	\arrow["h"', from=2-2, to=1-3]
\end{tikzcd}\]
where $v=\begin{pmatrix}
1\\0
\end{pmatrix}$ and $h=\begin{pmatrix}
f&t
\end{pmatrix}$. Clearly $v$ is $\omega$-trivial cofibration and $h$ is a $\omega$-fibration. This proves $(1)$.

For proving $(2)$, by definition of $lw$-cotorsion pair, we have an epimorphism $g_B:X_B\rightarrow B$, with $X_B\in\mathcal{X}$. This gives us the solid part of the following commutative diagram with exact first row, where $j=\begin{pmatrix}
1\\0
\end{pmatrix}$ and $r=\begin{pmatrix}
f&g_B
\end{pmatrix}$.
\[\begin{tikzcd}
	&& A &&& \\
	0 & K & {A\oplus X_B} & {} & B & 0 \\
	0 & {Y^K} & E && B & 0
	\arrow["j"', from=1-3, to=2-3]
	\arrow["f", from=1-3, to=2-5]
	\arrow[from=2-1, to=2-2]
	\arrow[from=2-2, to=2-3]
	\arrow["g"', dashed, from=2-2, to=3-2]
	\arrow["r"', from=2-3, to=2-5]
	\arrow["h"', dashed, from=2-3, to=3-3]
	\arrow[from=2-5, to=2-6]
	\arrow["id", dashed, from=2-5, to=3-5]
	\arrow[dashed, from=3-1, to=3-2]
	\arrow[dashed, from=3-2, to=3-3]
	\arrow["p", dashed, from=3-3, to=3-5]
	\arrow[dashed, from=3-5, to=3-6]
\end{tikzcd}\]
By the definition of $hh$-$lw$-cotorsion pair, we have a $\omega$-cofibration $g:K\rightarrow Y^K$, with $Y^K\in\mathcal{Y}$.
Taking push out, we obtain the dashed part of the diagram where the bottom row is also exact.
By the proof of Lemma \ref{2.7}, being a push out of $g$, $h$ is also a $\omega$-cofibration, and so by Proposition \ref{3.10} the composition $hj$ is also an $\omega$-cofibration. Then we have desired factorization $f=p\circ (hj)$.
\end{proof}
\end{proposition}

To prove that $(Cof_{\omega},We_{\omega},Fib_{\omega})$ is a model structure it remains to prove that $We_{\omega}$ is closed under retract, and it satisfies $2$ out of $3$ axiom. To do this we need the following characterization of morphisms in $We_{\omega}$ from \cite{CLZ}.

\begin{lemma}\label{3.13}
The following are equivalent for a morphism $f:A\rightarrow B$.
\begin{itemize}
\item[(1)]
$f\in We_{\omega}$.
\item[(2)]
For any right $\omega$-approximation $t:W\rightarrow B$, $\Ker\big(\begin{pmatrix}f&t\end{pmatrix}:A\oplus W\rightarrow B\big)\in \mathcal{Y}$. Equivalently, we have a factorization of $f$ as composition of an $\omega$-trivial cofibration followed by a $\omega$-trivial fibration, as in the diagram
\[\begin{tikzcd}
	A && B \\
	& {A\oplus W}
	\arrow["f", from=1-1, to=1-3]
	\arrow["e"', from=1-1, to=2-2]
	\arrow["d"', from=2-2, to=1-3]
\end{tikzcd}\]
where $e=\begin{pmatrix}
    1\\
    0
\end{pmatrix}$ and $d=\begin{pmatrix}
    f&t
\end{pmatrix}$.
\end{itemize}
\begin{proof}
   $(1)\Rightarrow (2)$ is proved in \cite[Lemma 3.6]{CLZ}, and $(2)\Rightarrow (1)$ follows from the definition of $We_{\omega}$.
\end{proof}
\end{lemma}

Now we can prove that $\omega$-weak equivalences are closed under retract, and so together with Proposition \ref{3.10} this complete the proof of retract axiom.

\begin{proposition}\label{3.14}
The class $We_{\omega}$ of $\omega$-weak equivalences is closed under retracts.
\begin{proof}
Let $g$ be a retract of $f$. Without lose of generality we may assume that $f$ is of the form $\begin{pmatrix}
g&0\\
0&g'
\end{pmatrix}:X\oplus X'\rightarrow Y\oplus Y'$.
Choose right $\omega$-approximations $t:W\rightarrow Y$ and $t':W'\rightarrow Y'$ for $Y$ and $Y'$ respectively. Thus,
$\begin{pmatrix}
t&0\\
0&t'
\end{pmatrix}:W\oplus W'\rightarrow Y\oplus Y'$ is a right $\omega$-approximation for $Y\oplus Y'$. Consider the factorization
\[\begin{tikzcd}
	{X\oplus X'} && {Y\oplus Y'} & {} \\
	& {X\oplus W\oplus X'\oplus W'} & {}
	\arrow["f", from=1-1, to=1-3]
	\arrow["\alpha"', from=1-1, to=2-2]
	\arrow["\beta"', from=2-2, to=1-3]
\end{tikzcd}\]
for $f$ where
$\alpha=\begin{pmatrix}
1&0\\0&0\\0&1\\0&0
\end{pmatrix}$ and $\beta=\begin{pmatrix}
g&t&0&0\\0&0&g'&t'
\end{pmatrix}$.
Clearly $\alpha\in TCof_{\omega}$ and by Lemma \ref{3.13} $\beta\in TFib_{\omega}$. Because $\begin{pmatrix}
g&t
\end{pmatrix}$ is a retract of $\beta$, by Proposition \ref{3.10} it is in $TFib_{\omega}$. This finishes the proof.
\end{proof}
\end{proposition}

Now we want to prove $2$ out of $3$ axiom for $\omega$-weak equivalences. It should be mentioned that, most of the proofs are inspired from \cite{CLZ}.

\begin{proposition}\label{3.15}
The class $We_{\omega}$ of $\omega$-weak equivalences is closed under composition.
\begin{proof}
The proof of \cite[Lemma 3.5]{CLZ} carries over.
\end{proof}
\end{proposition}

\begin{proposition}\label{3.16}
Let $f$ and $g$ be morphisms in $\mathcal{A}$ such that $gf$ is defined.
If $f$ and $gf$ are $\omega$-weak equivalences, then so is $g$.
\begin{proof}
The proof of \cite[Lemma 3.8]{CLZ} carries over.
\end{proof}
\end{proposition}

The proof of the remaining case $g, gf\in We_{\omega}\Rightarrow f\in We_{\omega}$ is slightly deferent from \cite{CLZ} because here $\omega$-cofibrations are not necessarily monic. But the proofs in \cite{CLZ} can be modified to our setting.

\begin{lemma}\label{3.17}
Let $f\in We_{\omega}$. If we factorize $f$ as $f=pi$ where $i\in Cof_{\omega}$ and $p\in TFib_{\omega}$ (we can always do this by Proposition \ref{3.12}), then $i\in TCof_{\omega}$.
\begin{proof}
Because $f\in We_{\omega}$, by definition, we have a factorization of $f$ as
\[\begin{tikzcd}
	A & {A\oplus W} & B
	\arrow["\begin{array}{c} \begin{pmatrix}1\\0\end{pmatrix} \end{array}", from=1-1, to=1-2]
	\arrow["{(f,t)}", from=1-2, to=1-3]
\end{tikzcd}\]
where $W\in \omega$ and $\Ker(f,t)\in\mathcal{Y}$. So we have the following commutative square.
\[\begin{tikzcd}
	A & {A\oplus W} \\
	C & B
	\arrow["\begin{array}{c} \begin{pmatrix} 1\\ 0 \end{pmatrix} \end{array}", from=1-1, to=1-2]
	\arrow["i"', from=1-1, to=2-1]
	\arrow["{(f,t)}", from=1-2, to=2-2]
	\arrow["\lambda", dashed, from=2-1, to=1-2]
	\arrow["p", from=2-1, to=2-2]
\end{tikzcd}\]
Then by the Lifting axiom, which has been proved in Proposition \ref{3.8}, the lift $\lambda=(\lambda_1,\lambda_2)$ exists.
Then $\lambda_1 i=id$, so $i$ is an split monic. Without lose of generality we can assume that $i=\begin{pmatrix} 1\\ 0 \end{pmatrix}:A\rightarrow A\oplus M$, and then $p=(p_1,p_2)=(f,p_2)$ by the commutativity of the square.

It remains to prove that $M\in \omega=\mathcal{X}\cap\mathcal{Y}$.
Let $0\rightarrow Y\rightarrow E\overset{g}{\rightarrow} M\rightarrow 0$ be a short exact sequence with $Y\in\mathcal{Y}$. By definition $g\in TFib_{\omega}$. Because $i$ has left lifting property with respect to $g$, we can easily see that $g$ is a split epic. So $M\in{}^{\perp_1}\mathcal{Y}=\mathcal{X}$.

Now consider the following factorization for $p=(f,p_2)$.

\begin{center} 
 \begin{tikzpicture}
 \node (X1) at (-4,0) {$A\oplus M$};
 \node (X2) at (0,0) {$A\oplus W\oplus M$};
 \node (X3) at (4,0) {$B$};
 \draw [->,thick] (X1) -- (X2) node [midway,above] {$\begin{pmatrix}
1&0\\0&0\\0&1
\end{pmatrix}$};
 \draw [->,thick] (X2) -- (X3) node [midway,above] {$(f,t,p_2)$};
 \end{tikzpicture}
 \end{center}

By Lemma \ref{3.9} we have the following exact sequence.
\[0\rightarrow\Ker(f,p_2)\rightarrow \Ker(f,t,p_2)\rightarrow \Cok(\begin{pmatrix}
1&0\\0&0\\0&1
\end{pmatrix})\rightarrow 0\]
Since $\Ker(f,p_2)=\Ker(p)\in\mathcal{Y}$, $\Cok\begin{pmatrix}
1&0\\0&0\\0&1
\end{pmatrix}=W\in\mathcal{Y}$, and $\mathcal{Y}$ is closed under extensions, we have that $\Ker(f,t,p_2)\in\mathcal{Y}$.

On the other hand, the short exact sequence
\begin{center} 
 \begin{tikzpicture}
\node (X0) at (-6,0) {$0$};
 \node (X1) at (-4,0) {$\Ker(f,t,p_2)$};
 \node (X2) at (0,0) {$A\oplus W\oplus M$};
 \node (X3) at (4,0) {$B$};
\node (X4) at (6,0) {$0$};
 \draw [->,thick] (X1) -- (X2) node [midway,above] {$\begin{pmatrix}
a\\b\\c
\end{pmatrix}$};
 \draw [->,thick] (X2) -- (X3) node [midway,above] {$(f,t,p_2)$};
\draw [->,thick] (X0) -- (X1) node [midway,above] {};
\draw [->,thick] (X3) -- (X4) node [midway,above] {};
 \end{tikzpicture}
 \end{center}
gives us the following pull back diagram
\[\begin{tikzcd}
	0 & {\Ker(f,t)} & {\Ker(f,t,p_2)} & M & 0 \\
	0 & {\Ker(f,t)} & {A\oplus W} & B & 0
	\arrow[from=1-1, to=1-2]
	\arrow[from=1-2, to=1-3]
	\arrow["id"', from=1-2, to=2-2]
	\arrow["-c", from=1-3, to=1-4]
	\arrow["\bar{a}", from=1-3, to=2-3]
	\arrow[from=1-4, to=1-5]
	\arrow["p_2", from=1-4, to=2-4]
	\arrow[from=2-1, to=2-2]
	\arrow[from=2-2, to=2-3]
	\arrow["{(f,t)}", from=2-3, to=2-4]
	\arrow[from=2-4, to=2-5]
\end{tikzcd}\]
where $\bar{a}:=\begin{pmatrix}
    a\\b
\end{pmatrix}$. By the hereditary assumption on the $lw$-cotorsion pair $(\mathcal{X},\mathcal{Y})$ and exactness of the first row we see that $M\in\mathcal{Y}$.
\end{proof}
\end{lemma}

\begin{proposition}\label{3.18}
Suppose that $gf, g\in We_{\omega}$. Then $f\in\omega$.
\begin{proof}
Since $g\in We_{\omega}$, we have the solid part of the following commutative diagram where $W\in\omega$,
$j=\begin{pmatrix}1\\ 0\end{pmatrix}$, and $h\in TFib_{\omega}$.
\[\begin{tikzcd}
	A && B && C \\
	& P && {B\oplus W}
	\arrow["f", from=1-1, to=1-3]
	\arrow["i"', dashed, from=1-1, to=2-2]
	\arrow["g", from=1-3, to=1-5]
	\arrow["j", from=1-3, to=2-4]
	\arrow["p"', dashed, from=2-2, to=2-4]
	\arrow["h"', from=2-4, to=1-5]
\end{tikzcd}\]
Then by the factorization axiom, which is proved in Proposition \ref{3.12}, we have the factorization $jf=pi$ with $p\in TFib_{\omega}$ and $i\in Cof_{\omega}$, as suggested by the dashed arrows.
Because $gf=(hp)i\in We_{\omega}$, $hp\in TFib_{\omega}$ and $i\in Cof_{\omega}$, by Lemma \ref{3.17} we have $i\in TCof_{\omega}$. So $jf=\begin{pmatrix}f\\0\end{pmatrix}\in We_{\omega}$. Since $f$ is clearly a retract of $jf$, by Proposition \ref{3.14} $f\in We_{\omega}$.
\end{proof}
\end{proposition}
Let's summarize what we have proven in Subsection 3.2.
\begin{proposition}\label{3.19}
    Let $(\mathcal{X},\mathcal{Y})$ be a $hh$-$lw$-cotorsion pair in $\mathcal{A}$ with a contravariantly finite core $\omega=\mathcal{X}\cap\mathcal{Y}$.
    Then $(Cof_{\omega},We_{\omega},Fib_{\omega})$ is a weak projective model structure on $\mathcal{A}$.
    \begin{proof}
    The lifting axiom is proved in Proposition \ref{3.8}. The retract axiom is proved in Propositions \ref{3.10} and \ref{3.14}.
    The factorization axiom is proved in Proposition \ref{3.12}. And $2$ out of $3$ axiom is proved in Propositions \ref{3.15}, \ref{3.16} and \ref{3.18}.
    So $(Cof_{\omega},We_{\omega},Fib_{\omega})$ is a model structure.

    By definition, $TFib_{\omega}$ coincides with the class of all epics with kernel in $\mathcal{Y}$ (=the class of trivially fibrant objects). Also, for every object $A\in\mathcal{A}$, one can easily see that $A\rightarrow 0$ has right lifting property with respect to all morphisms in $TCof_{\omega}$. Thus all objects are fibrant, which means that $(Cof_{\omega},We_{\omega},Fib_{\omega})$ is a weak projective model structure.
    \end{proof}
\end{proposition}

Now we can prove Theorem \ref{1.2}, which is an analogous of Beligiannis-Reiten correspondence for $lw$-cotorsion pairs.

\begin{theorem}\label{3.20}
Let $\mathcal{A}$ be an abelian category. There exists a bijective correspondence between weak projective model structures on $\mathcal{A}$ and $hh$-$lw$-cotorsion pairs in $\mathcal{A}$ with contravariantly finite core. This correspondence acts by
\begin{align*}
\Phi:\mathcal{M}=(Cof,We,Fib)\longmapsto (\text{Cofibrant objects},\text{Trivially fibrant objects})
\end{align*}
and the inverse is given by
\begin{align*}
\Psi:\mathcal{C}=(\mathcal{X},\mathcal{Y})\longmapsto (\omega\text{-Cofibrations},\omega\text{-Weak equivalences}, \omega\text{-Fibrations})
\end{align*}
where $\omega=\mathcal{X}\cap\mathcal{Y}$.
\begin{proof}
By Proposition \ref{3.5} $\Phi$ is well-defined, and by Proposition \ref{3.19} $\Psi$ is well-defined.
Now we prove that they are inverse of each other.

In order to prove that $\Psi\Phi$ is identity, we must show that for any weak projective model structure $(Cof,We,Fib)$, we have
\[(Cof,We,Fib)=(\omega\text{-Cofibrations},\omega\text{-Weak equivalences}, \omega\text{-Fibrations}),\]
where $\omega$ is the class of trivially cofibrant objects.
Clearly $TFib=TFib_{\omega}$.
Because any model structure is uniquely determined by two classes of trivial fibrations and trivial cofibrations, it is enough to prove that
\[TCof=TCof_{\omega}:=\{\text{split monics with kernel in }\omega\}.\]
Every split monic with cokernel in $\omega$ is (isomorphic to) a morphism of the form $i:=\begin{pmatrix}
    1\\0
\end{pmatrix}:A\rightarrow A\oplus W$ with $W\in\omega$. Because $1:A\rightarrow A$ and $0\rightarrow W$ are both in $TCof$, so is $i$.
This proves the inclusion $\supseteq$. Since cokernel of any trivial cofibration is trivially cofibrant, to prove the inclusion $\subseteq$ it is enough to prove that any trivial cofibration in $(Cof,We,Fib)$ is a split monic. For a trivial cofibration $X\rightarrow Y$, because all objects are fibrant by definition, the following square admits a lift as suggested by the dashed arrow, and this proves the claim.
\[\begin{tikzcd}
	X & X \\
	Y & 0
	\arrow["\id", from=1-1, to=1-2]
	\arrow[from=1-1, to=2-1]
	\arrow[from=1-2, to=2-2]
	\arrow[dashed, from=2-1, to=1-2]
	\arrow[from=2-1, to=2-2]
\end{tikzcd}\]
Now let prove that $\Phi\Psi$ is the identity map. Let $(\mathcal{X},\mathcal{Y})$ be a $hh$-$lw$-cotorsion pair with the contravariantly finite core $\omega$. We need to prove that for the model structure
\[(\omega\text{-Cofibrations},\omega\text{-Weak equivalences}, \omega\text{-Fibrations}),\]
the class of cofibrant objects is equival to $\mathcal{X}$ and the class of trivially fibrant objects is equal to $\mathcal{Y}$. The latter is trivial by the definition of $\omega$-trivial fibrations.
For the former, we prove that for an object $A\in\mathcal{A}$, $A$ is a cofibrant object if and only if $A\in {}^{\perp_1}\mathcal{Y}=\mathcal{X}$.
If $A$ is a cofibrant object, then any short exact sequence $0\rightarrow Y\rightarrow E\rightarrow A\rightarrow 0$ splits because $E\rightarrow A$ is an $\omega$-trivial fibration. Conversely, if $A\in {}^{\perp_1}\mathcal{Y}$, then $0\rightarrow A$ has left lifting property with respect to all epics with kernel in $\mathcal{Y}$, which means that $0\rightarrow A$ is a $\omega$-cofibration.
\end{proof}
\end{theorem}
The following result recovers Beligiannis-Reiten correspondence, and shows that the correspondence of Theorem \ref{3.20} is a generalization of their correspondence.

\begin{corollary}\label{3.21}
Let $\mathcal{A}$ be an abelian category. There exists a bijective correspondence between projective model structures on $\mathcal{A}$ and hereditary cotorsion pairs in $\mathcal{A}$ with contravariantly finite core. This correspondence acts by
\begin{align*}
\Phi|:\mathcal{M}=(Cof,We,Fib)\longmapsto (\text{Cofibrant objects},\text{Trivially fibrant objects})
\end{align*}
and the inverse is given by
\begin{align*}
\Psi|:\mathcal{C}=(\mathcal{X},\mathcal{Y})\longmapsto (\omega\text{-Cofibrations},\omega\text{-Weak equivalences}, \omega\text{-Fibrations})
\end{align*}
where $\omega=\mathcal{X}\cap\mathcal{Y}$.
\begin{proof}
Because projective model structures are a subclass of weak projective model structures, and hereditary cotorsion pairs are a subclass of $hh$-$lw$-cotorsion pairs, by Theorem \ref{3.20} we only need to show that $\Phi|$ and $\Psi|$ are well-defined.

Let $(Cof,We,Fib)$ be a projective model structure. because cofibrations are monic, the $lw$-cotorsion pair $(\mathpzc{C},\mathpzc{TF})$ is indeed a cotorsion pair. This proves that $\Phi|$ is well-defined.

Conversley, let $(\mathcal{X},\mathcal{Y})$ be a hereditary cotorsion pair with 
contravariantly finite core $\omega$. Then by Theorem \ref{3.20} $(\omega\text{-Cof},\omega\text{-We},\omega\text{-Fib})$ is a weak projective model structure, and if we show that cofibrations are all monic, it would be a projective model structure. So let $f:A\rightarrow B$ be a $\omega$-cofibration. By the definition of cotorsion pair we 
have a monic $A\rightarrow Y$ with $Y\in\mathcal{Y}$. Then in the following commutative diagram, the 
existence of the lift $\lambda$ shows that $f$ is also a monic.
\[\begin{tikzcd}
	A & Y \\
	B & 0
	\arrow[tail, from=1-1, to=1-2]
	\arrow["f"', from=1-1, to=2-1]
	\arrow[from=1-2, to=2-2]
	\arrow["\lambda"{description}, dashed, from=2-1, to=1-2]
	\arrow[from=2-1, to=2-2]
\end{tikzcd}\]
Thus, $\Psi|$ is also well-defined.
\end{proof}
\end{corollary}

\section{A homotopical interpretation of support $\tau$-tilting modules}
Let $\Lambda$ be an Artin algebra and $\modd\text{-}\Lambda$ be the abelian category of finitely generated left $\Lambda$-modules. As always, $\tau$ is the Auslander-Reiten translation.
Beligiannis and Reiten proved that (co)tilting modules in $\modd\text{-}\Lambda$ are in bijection with a certain class of model structures on $\modd\text{-}\Lambda$ \cite[VIII, Theorem 5.8]{BR}. In this section we prove a similar result for support $\tau$-tilting modules.
\begin{definition}$($\cite{AIR}$)$
Let $T\in\modd\text{-}\Lambda$.
\begin{itemize}
\item
$T$ is called {\it $\tau$-rigid}, if $\Hom_{\Lambda}(T,\tau T)=0$.
\item
$T$ is called a {\it $\tau$-tilting module}, if $T$ is $\tau$-rigid and the number of non-isomorphic summands of $T$ is equal to that of $\Lambda$ itself.
\item
$T$ is called a {\it support $\tau$-tilting module} if $T$ is $\tau$-tilting module over its support (i.e. there is an idempotent $e\in \Lambda$ such that $T$ is
a $\tau$-tilting module in $\modd\text{-}\Lambda/<e>$. 
\end{itemize}
\end{definition}

We need the following two results from $\tau$-tilting theory. Recall that for a given module $T\in\modd\text{-}\Lambda$, $\add T$ is the subcategory of all modules in $\modd\text{-}\Lambda$ which are direct summand of a finite direct sum of copies of $T$. And $\Fac T$ is the subcategory of all modules in $\modd\text{-}\Lambda$ which are a factor of a module in $\add T$.

\begin{lemma}$($\cite[Proposition 2.14]{J}\label{4.2}
A $\tau$-rigid $\Lambda$-module $T$ is a support $\tau$-tilting module if and only if there exists an exact sequence
\begin{equation}\label{eq4.1}
\Lambda\overset{f}{\longrightarrow}T_0\overset{g}{\longrightarrow} T_1\rightarrow 0
\end{equation}
with $T^0,T^1\in \add T$ and $f$ a left $(\add T)$-approximation for $\Lambda$. In this case, since $\Lambda$ is projective, $f$ is a left $(\Fac T)$-approximation too.
\end{lemma}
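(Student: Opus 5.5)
The plan is to reduce everything to two standard facts from $\tau$-tilting theory, both taken from \cite{AIR} together with the Auslander--Smal\o{} results quoted there.

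\begin{itemize}
\item[(a)] If $T$ is $\tau$-rigid, then $\Fac T$ is a functorially finite torsion class and $\Ext^1_{\Lambda}(T,\Fac T)=0$. Hence $T$ is Ext-projective in $\Fac T$, i.e. $T\in P(\Fac T)$.
\item[(b)] The map $T\mapsto \Fac T$ is a bijection from basic support $\tau$-tilting modules to functorially finite torsion classes, and for a support $\tau$-tilting $T$ we have $P(\Fac T)=\add T$. Moreover, for a functorially finite torsion class $\mathcal{T}$ and a left $\mathcal{T}$-approximation $\Lambda\to T_0\to T_1\to 0$, the module $T_0\oplus T_1$ is support $\tau$-tilting with $\Fac(T_0\oplus T_1)=\mathcal{T}$.
\end{itemize}

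I first prove the last sentence of the statement, since both directions use it. Let $f:\Lambda\to T_0$ be a left $(\add T)$-approximation and let $M\in\Fac T$ with an epimorphism $\pi:T'\to M$, $T'\in\add T$. Since $\Lambda$ is projective, any $h:\Lambda\to M$ lifts through $\pi$ to a map $\Lambda\to T'$. That map factors through $f$, so $h$ factors through $f$. Thus $f$ is also a left $(\Fac T)$-approximation.

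($\Rightarrow$) Assume $T$ is support $\tau$-tilting. Take a left $(\add T)$-approximation $f:\Lambda\to T_0$ and set $T_1:=\Cok f$. Then $T_1\in\Fac T$. I claim $T_1$ is Ext-projective in $\Fac T$. Let $M\in\Fac T$ and factor $f$ as $\Lambda\overset{p}{\twoheadrightarrow} I\hookrightarrow T_0$ with $I=\operatorname{Im} f$. Applying $\Hom(-,M)$ to $0\to I\to T_0\to T_1\to0$ gives
\[\Hom(T_0,M)\to\Hom(I,M)\to\Ext^1(T_1,M)\to\Ext^1(T_0,M),\]
and the last term vanishes by (a). For the first map, take $h:I\to M$. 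The composite $hp$ factors through $f$ by the paragraph above, say $hp=h'f$. Since $p$ is epic, $h'$ restricts to $h$ on $I$, so the first map is surjective. Hence $\Ext^1(T_1,M)=0$, so $T_1\in P(\Fac T)=\add T$ by (b).

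($\Leftarrow$) Assume the sequence exists with $T_0,T_1\in\add T$. Since $\Fac T$ is a functorially finite torsion class by (a), and $f$ is a left $(\Fac T)$-approximation, (b) shows that $T_0\oplus T_1$ is support $\tau$-tilting with $\Fac(T_0\oplus T_1)=\Fac T$.
\begin{itemize}
\item One inclusion, $\add(T_0\oplus T_1)\subseteq\add T$, holds by hypothesis.
\item For the other, $T$ is Ext-projective in $\Fac T=\Fac(T_0\oplus T_1)$ by (a). By (b), $P(\Fac(T_0\oplus T_1))=\add(T_0\oplus T_1)$, so $T\in\add(T_0\oplus T_1)$.
\end{itemize}
Hence $\add T=\add(T_0\oplus T_1)$, and $T$ is support $\tau$-tilting.

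The main obstacle is ($\Leftarrow$): it rests entirely on the AIR statement that $T_0\oplus T_1$ built from a left approximation of $\Lambda$ is support $\tau$-tilting. If I had to reprove that, I would show $T_1$ is Ext-projective exactly as in ($\Rightarrow$). Then I would show any Ext-projective $X\in\Fac T$ is a summand of a module in $\add(T_0\oplus T_1)$: push out along $\Lambda^n\to X$ and use $\Ext^1(X,-)=0$ on the resulting sequence to split it.
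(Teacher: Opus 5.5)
The paper gives no argument of its own for this lemma. It cites Jasso, and the only reasoning it adds is the projectivity remark behind the last sentence, which is exactly your first paragraph. Your $(\Rightarrow)$ direction is also correct.

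\textbf{The gap in $(\Leftarrow)$.} The problem is the second half of your fact (b): it is false as you state it, and you apply it to an approximation that need not be minimal.

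\emph{Counterexample.} Take $\mathcal{T}=\modd\text{-}\Lambda$ and any non-projective $Z$. Then $\begin{pmatrix}1\\0\end{pmatrix}:\Lambda\to\Lambda\oplus Z$ is a left $\mathcal{T}$-approximation with cokernel $Z$. But $\Lambda\oplus Z\oplus Z$ is not even $\tau$-rigid, since $\Hom_{\Lambda}(\Lambda,\tau Z)\neq 0$.

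The known result needs $f$ to be left minimal; minimality is precisely what makes $T_0$ Ext-projective in $\mathcal{T}$.

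\emph{Fix.} Write $f=\begin{pmatrix}f'\\0\end{pmatrix}:\Lambda\to T_0'\oplus Z$ with $f'$ left minimal, so that $T_1\cong T_1'\oplus Z$. Apply (b) to $T_0'\oplus T_1'$. Your two bullets then go through unchanged with $T_0'\oplus T_1'$ in place of $T_0\oplus T_1$, because $\add(T_0'\oplus T_1')\subseteq\add T$ still holds.

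\textbf{The fallback sketch.} It does not work as written. Pushing out $f^n$ along $\Lambda^n\to X$ gives $0\to X\to E\to T_1^n\to 0$ with $X$ as the subobject. So $\Ext^1(X,-)=0$ says nothing about this sequence, and splitting it via $\Ext^1(T_1^n,X)=0$ only returns $E\cong X\oplus T_1^n$.

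Here is an argument that does work for $X\in P(\Fac T)$.
\begin{enumerate}
\item First, $\Fac T=\Fac T_0$, by the last sentence of the lemma.
\item Take a right $\add(T_0\oplus T_1)$-approximation $g:U'\to X$ with kernel $\iota:K\to U'$. It is epic because $X\in\Fac T_0$.
\item Given $a:\Lambda\to K$, write $\iota a=bf$; this is possible because $U'\in\Fac T$.
\item Then $gbf=0$, so $gb=cq$, where $q:T_0\to T_1$ is the cokernel map.
\item Since $T_1\in\add(T_0\oplus T_1)$, write $c=gd$.
\item Then $g(b-dq)=0$, so $b-dq=\iota e$ for some $e$.
\item Finally $\iota ef=bf=\iota a$, so $ef=a$.
\end{enumerate}
Hence $K\in\Fac T_0=\Fac T$, so $\Ext^1(X,K)=0$, $g$ splits, and $X\in\add(T_0\oplus T_1)$.

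Together with only the bijection part of (b), this proves $(\Leftarrow)$ directly, without the misstated half of (b). Let $U$ be the support $\tau$-tilting module with $\Fac U=\Fac T$. Then
$\add T\subseteq P(\Fac T)=\add U\subseteq\add(T_0\oplus T_1)\subseteq\add T$,
so $\add T=\add U$ and $T$ is support $\tau$-tilting.
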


\begin{proposition}\label{4.3}
Let $T\in \modd\text{-}\Lambda$ be a support $\tau$-tilting module.
\begin{itemize}
\item[(1)]
$\Fac T$ is a torsion class, i.e. it is closed under extensions and factor modules.
\item[(2)]
$\Fac T$ is functorially finite.
\end{itemize}
\begin{proof}
The proofs can be found in \cite{AIR}.
Because the proof of $(2)$ is crucial for the main result of this section, we sketch the proof.
Since all torsion classes are contravariantly finite, we need to argue why $\Fac T$ is covariantly finite?
For a given module $M\in\modd\text{-}\Lambda$, choose an epic $\pi:\Lambda^n\rightarrow M$.
Then by Lemma \ref{4.2} we have the solid part of the following diagram where $f^n:=\begin{pmatrix}
f&0&\cdots&0\\
0&f&\cdots&0\\
\ldots&\ldots&\ldots&\ldots\\
0&0&\cdots&f
\end{pmatrix}$ and the same for $g^n$.
\[\begin{tikzcd}
	{\Lambda^n} & {T_0^n} & {T_1^n} & 0 \\
	M & E & {T_1^n} & 0 \\
	0 & 0
	\arrow["{f^n}", from=1-1, to=1-2]
	\arrow["\pi"', from=1-1, to=2-1]
	\arrow["{g^n}", from=1-2, to=1-3]
	\arrow["\phi"', dashed, from=1-2, to=2-2]
	\arrow[from=1-3, to=1-4]
	\arrow["id"', dashed, from=1-3, to=2-3]
	\arrow["h"', dashed, from=2-1, to=2-2]
	\arrow[from=2-1, to=3-1]
	\arrow[dashed, from=2-2, to=2-3]
	\arrow[dashed, from=2-2, to=3-2]
	\arrow[dashed, from=2-3, to=2-4]
\end{tikzcd}\]
By taking push out along $\pi$, we obtain the dashed arrows, making the whole diagram commutative with exact rows and columns. $f^n$ is a left $(\Fac T)$-approximation because so is $f$. Then, by the universal property of push out $h$ is a left $(\Fac T)$-approximation for $M$.
\end{proof}
\end{proposition}

\begin{proposition}\label{4.4}
Let $T\in \modd\text{-}\Lambda$ be a support $\tau$-tilting module.
\begin{itemize}
\item[(1)]
$({}^{\perp_1}(\Fac T),\Fac T)$ is a $hh$-$lw$-cotorsion pair in $\modd\text{-}\Lambda$, and the core is $\add T$, which is contravariantly finite.
\item[(2)]
Sending $T$ to $({}^{\perp_1}(\Fac T),\Fac T)$ gives a bijective correspondence between support $\tau$-tilting $\Lambda$-modules and and $hh$-$lw$-cotorsion pairs $(\mathcal{X},\mathcal{Y})$ with $\mathcal{Y}$ a torsion pair.
\end{itemize}
\begin{proof}
Most of the requirements has been proved in \cite{BZ}.
The only thing that remains to prove is that the $lw$-cotorsion pair $({}^{\perp_1}(\Fac T),\Fac T)$ is a homotopicaly $lw$-cotorsion pair.

First, let us prove that in the exact sequence \eqref{eq4.1} given in the Lemma \ref{4.2} $f$ has left lifting property with respect to all epics with kernel in $\Fac T$. So, assume that we are given the exact commutative diagram 
\[\begin{tikzcd}
	&& \Lambda & {T_0} & {T_1} & 0 \\
	0 & Y & B & C & 0
	\arrow["f", from=1-3, to=1-4]
	\arrow["a"', from=1-3, to=2-3]
	\arrow["g", from=1-4, to=1-5]
	\arrow["b", from=1-4, to=2-4]
	\arrow[from=1-5, to=1-6]
	\arrow[from=2-1, to=2-2]
	\arrow["r", from=2-2, to=2-3]
	\arrow["s", from=2-3, to=2-4]
	\arrow[from=2-4, to=2-5]
\end{tikzcd}\]
with $Y\in\Fac T$. Since $\Ext_{\Lambda}^1(T,Y)=0$, there exists a morphism $\lambda_1:T_0\rightarrow B$ such that $s\lambda_1=b$.
Then, because $s(a-\lambda_1f)=sa-bf=0$, and $\Lambda$ is a projective module, there exists a morphism $\lambda_2:\Lambda\rightarrow Y$ such that $a-\lambda_1f=r\lambda_2$.
Finally, because $f$ is a left $(\Fac T)$-approximation, there exists a morphism $\lambda_3:T_0\rightarrow Y$ such that
$\lambda_2=\lambda_3f$. Then for $\lambda:=\lambda_1+r\lambda_3$ we can see that
\begin{align*}
\lambda f&=\lambda_1f+r\lambda_3f\\
              &=(a-r\lambda_2)+r\lambda_3f\\
              &=a-r(\lambda_2-\lambda_3f)\\
              &=a, \text{and}\\
s\lambda &=s(\lambda_1+r\lambda_3)\\
              &=s\lambda_1\\
              &=b.
\end{align*}
Thus we have proved that $f$ has left lifting property with respect to all epics with kernel in $\Fac T$.
As we saw in the proof of Proposition \ref{4.3}, for any positive integer $n$, $f^n$ also satisfies this condition, and for any $M\in\modd\text{-}\Lambda$, we have a short exact sequence $M\overset{h}{\longrightarrow}E\rightarrow T'\rightarrow 0$, such that $h$ is a push out of $f^n$ for some positive integer $n$, $E\in \Fac T$ and $T'\in \add T$.
Since $h$ is a push out of $f^n$, by the proof of Lemma \ref{2.7} it has  left lifting property with respect to all epics with kernel in $\Fac T$. This completes the proof.
\end{proof}
\end{proposition}

Now we can prove the main result of this section which gives a homotopical interpretation of support $\tau$-tilting modules.

\begin{theorem}\label{4.5}
Let $\Lambda$ be an Artin algebra. There is a bijective correspondence between:
\begin{itemize}
\item[(1)]
Isoclasses of basic support $\tau$-tilting $\Lambda$-modules.
\item[(2)]
Model structures $(Cof,We,Fib)$ in $\modd\text{-}\Lambda$ satisfying the following conditions.
\begin{itemize}
\item[(i)]
All modules in $\modd\text{-}\Lambda$ are fibrant.
\item[(ii)]
Trivial fibrations coincides with epics with trivially fibrant kernel.
\item[(iii)]
The subcategory of trivially fibrant objects is closed under factor modules.
\end{itemize}
\end{itemize}
\begin{proof}
For any support $\tau$-tilting $\Lambda$-modules $T$, $({}^{\perp_1}(\Fac T),\Fac T)$ is a $hh$-$lw$-cotorsion pair, with the contravariantly finite core $\add T$. Then, by Theorem \ref{3.20} we get the model structure
\[\big((\add T)\text{-}Cof,(\add T)\text{-}We,(\add T)\text{-}Fib\big),\]
which satisfies the requirements in $(2)$.
Conversely, let $(Cof,We,Fib)$ be a model structure satisfying the requirements in $(2)$.
Then the associated $hh$-$lw$-cotorsion pair under the bijection of Theorem \ref{3.20} is $(\mathpzc{C},\mathpzc{TF})$.
By assumption, $\mathpzc{TF}$ is a torsion class. Thus by Proposition \ref{4.4} $(\mathpzc{C},\mathpzc{TF})=({}^{\perp_1}(\Fac T),\Fac T)$ for a unique support $\tau$-tilting module $T$.
\end{proof}
\end{theorem}

\section*{acknowledgments}
Ramin Ebrahimi is supported by Zhejiang Normal University.
Rasool Hafezi is supported by the National Natural Science of China (Grant No. 12571042).
Jiaqun Wei is supported by the National Natural Science Foundation of China (Grant Nos. 12571042, 12271249) and the Natural Science Foundation of Zhejiang Province (Grant No. LZ25A010002).
	

\end{document}